\documentclass[a4paper,11pt]{article}

\usepackage[T1]{fontenc} 
\usepackage[utf8]{inputenc}
\usepackage[ngerman,american]{babel} 

\usepackage{amsmath}
\usepackage{amsfonts}
\usepackage{amssymb}
\usepackage{graphicx}
\usepackage{color}
\usepackage{tikz} 

%
%

\usepackage{xspace}

\usepackage{fullpage}
\usepackage{amsthm}
\theoremstyle{plain}
\newtheorem{theorem}{Theorem}
\newtheorem{lemma}[theorem]{Lemma}

\newtheorem{corollary}[theorem]{Corollary}
\theoremstyle{definition}
\newtheorem{definition}[theorem]{Definition}

\newtheorem{conjecture}{Conjecture}

\usepackage[colorlinks]{hyperref}

\def\nicebreak{\vskip0pt plus50pt\penalty-300\vskip0pt plus-50pt }


\definecolor{lightblue}{rgb}{0.5,0.5,1.0}
\definecolor{darkred}{rgb}{0.5,0,0}
\definecolor{darkgreen}{rgb}{0,0.5,0}
\definecolor{darkblue}{rgb}{0,0,0.5}

 \hypersetup{colorlinks
 ,linkcolor=darkred
 ,filecolor=darkgreen
 ,urlcolor=darkred
 ,citecolor=darkblue}




\newcommand{\CClassNP}{\textup{NP}\xspace}

\newcommand{\NPhard}{\CClassNP-hard\xspace}

\newcommand{\dunion}{\mathbin{\dot{\cup}}}

\newcommand{\isg}{\boldsymbol{H}}


\DeclareMathOperator{\defect}{def}
\DeclareMathOperator{\hole}{hole}
\DeclareMathOperator{\cp}{cn}


\title{Competition Numbers, Quasi-Line Graphs and Holes}
\author{Brendan D.~McKay, Pascal Schweitzer and Patrick Schweitzer\thanks{This
work is supported 
by the Australian Research Council, the Fonds National de la Recherche, Luxembourg, and co-funded under the Marie Curie 
Actions of the European Commission (FP7-COFUND). } \\[2ex]
Research School of Computer Science\\
The Australian National University\\ 
Canberra, ACT 0200, Australia\\
{\tt bdm@cs.anu.edu.au, Pascal.Schweitzer@anu.edu.au} \\[1ex]
University of Luxembourg\\ 
Interdisciplinary Centre for Security, Reliability and Trust\\
6, rue Richard Coudenhove-Kalergi,
L-1359 Luxembourg\\
{\tt Patrick.Schweitzer@uni.lu}
}


\begin{document}

\maketitle

\begin{abstract}

The competition graph of an acyclic directed graph~$D$ is the 
undirected graph on the same vertex set as~$D$ in which two distinct
vertices are adjacent if they have a common out-neighbor in~$D$. 
The competition number of an undirected graph~$G$ is the least number of isolated vertices
that have to be added to~$G$ to make it the 
competition graph of an acyclic directed graph. 
We resolve two conjectures concerning competition graphs. First we prove a 
conjecture of Opsut by showing that the 
competition number of every quasi-line graph is at most~$2$. Recall that a 
quasi-line graph, also called a locally co-bipartite graph,
is a graph for which the neighborhood of every vertex can be partitioned into at most two 
cliques. To prove this conjecture we devise an alternative  
characterization of quasi-line graphs to the one by Chudnovsky and Seymour.
Second, we prove a conjecture of Kim by showing that the competition number of any graph is at most one greater than the number of holes in the graph.
Our methods also allow us to prove a strengthened form of this conjecture recently proposed by Kim, Lee, Park and Sano, showing that the competition number of any graph is at most one greater than the dimension of the subspace of the cycle space spanned by the holes.
\end{abstract}
{\textbf{Keywords:}  competition number, quasi-line graph, characterization, hole.}

\section{Introduction} \label{sec:introduction}

The \emph{competition graph} of an acyclic directed graph~$D$ is the 
undirected graph on the same vertex set as~$D$ in which two distinct
vertices~$u$ and~$v$ are 
adjacent if there is a vertex~$w$ such that~$(u,w)$ and~$(v,w)$ are arcs
in~$D$. That is, two vertices in the competition graph are adjacent if they
have a common out-neighbor in~$D$.
Competition graphs were introduced by Cohen~\cite{Cohen} in the context
of food webs, where adjacency of two vertices models the fact that they share
common prey and thus compete for food.

Roberts~\cite{Robertsfoodwebs} observed that by adding a sufficient number of
isolated vertices, every undirected graph~$G$ can be turned into the
competition graph of some acyclic directed graph~$D$. Quantifying this, for an
undirected graph~$G$, the \emph{competition number} of~$G$, 
denoted~$\cp(G)$, is the least number of isolated vertices
that have to be added to~$G$ to make it the 
competition graph of an acyclic directed graph~$D$. 

Opsut~\cite{MR679638} showed that computing the competition number is an
\NPhard problem. He also showed that the competition
number of a line graph is at most~$2$. 
He then conjectured that the bound also holds
for quasi-line graphs, i.e., for graphs in which the
neighborhood of every vertex can be partitioned into at most two cliques.

\begin{conjecture}[Opsut~\cite{MR679638}]\label{conj:opsut}
If~$G$ is a quasi-line graph, then~$G$ has competition number at most~$2$.
\end{conjecture}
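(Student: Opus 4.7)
The plan is to reduce the conjecture to an edge-clique-cover problem that is standard in the competition-number literature. Recall the folklore reduction: if $\mathcal{F} = \{C_1,\dots,C_m\}$ is a family of cliques covering every edge of $G$ and we can linearly order the vertices $v_1,\dots,v_n$ of $G$ together with a small set of added isolated vertices so that there is an injection $\phi$ from $\mathcal{F}$ into $V(G) \cup \{\text{new vertices}\}$ with $\phi(C)$ strictly greater than every vertex of $C$, then the DAG with $N^-(\phi(C)) = C$ realizes $G$ as a competition graph. Proving Conjecture~\ref{conj:opsut} therefore amounts to exhibiting, for every quasi-line $G$, an edge clique cover and an ordering in which all but at most two cliques can be ``absorbed'' by a vertex of $G$ (namely the largest vertex inside them).

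My first step would be to develop the alternative characterization of quasi-line graphs promised in the abstract, one that is adapted to this local absorption argument rather than to the global fuzzy-interval decomposition of Chudnovsky and Seymour. Concretely, I would try to characterize quasi-line graphs via a local recipe that, at every vertex $v$, exhibits a distinguished pair $(A_v,B_v)$ of cliques covering $N(v)$ together with compatibility conditions describing how the pairs $(A_u,B_u)$ and $(A_v,B_v)$ glue along an edge $uv$. The aim is a characterization stable under deleting a well-chosen vertex, so that induction becomes possible.

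With such a characterization in hand, I would proceed by induction on $|V(G)|$. At each step I select a ``peelable'' vertex $v$ (playing a role analogous to a simplicial vertex): one for which a single clique containing $v$ can be added to $\mathcal{F}$, with $v$ earmarked as its terminal $\phi$-image, and such that $G-v$ (or $G$ with $v$ replaced by a slight local surgery) is again quasi-line and has at least one of the two ``budget'' slots still available. The base cases I expect are small cliques, line graphs (where Opsut's bound of $2$ already applies), and indecomposable dense blocks, which I would handle directly.

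The main obstacle, as in virtually every argument on quasi-line graphs, will be the circular, non-line-graph part of the structure (the ``fuzzy circular interval'' pieces and homogeneous pairs). Here there is no simplicial vertex to peel, and cliques wrap around so that no linear ordering gives every clique a terminal vertex for free; the two units of slack in the competition-number budget must be spent globally on such a piece. I would treat these pieces as monolithic blocks: construct an explicit edge clique cover for each one together with an ordering in which exactly two cliques fail to have a terminal vertex inside the block, and then show that these two ``deficient'' cliques can always be accommodated by the two new isolated vertices, with the rest of the graph fitting together cleanly through the gluing conditions from the characterization. Making this global accounting match the local budget of $2$ is, I expect, where the combinatorial crux of the proof lies.
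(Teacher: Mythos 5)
Your proposal shares the paper's skeleton—a Lundgren--Maybee-type reduction (the paper uses Theorem~\ref{thm:char:lund:may}, recast as ``incremental clique coverings'' in Theorem~\ref{thm:clique:sequence}), plus an alternative structure theory for quasi-line graphs, plus an induction whose hard case is the circular structure—but both load-bearing components are left as unfulfilled subgoals, so what you have is a plan, not a proof. The first gap: the ``alternative characterization adapted to local absorption'' is never constructed, and in the form you describe it is unlikely to exist. Being quasi-line \emph{is} already a local condition (two cliques cover each neighborhood); the entire difficulty is making the local choices cohere globally, and a ``local recipe with compatibility conditions along edges'' is essentially a restatement of the problem. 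The paper's characterization (Theorem~\ref{thm:quasi:line:char}: fuzzy reconstructions via subtrees of a host graph~$H$) is deliberately global, is extracted from the Chudnovsky--Seymour structure theorem, and its payoff is exactly the coherence you would need: Lemma~\ref{lem:nicely:behaved:cliques} assigns to each vertex two cliques covering its star so that every assigned clique is also assigned to some \emph{other} vertex. That sharing property is what makes any counting argument work, and it cannot be read off from per-vertex clique pairs~$(A_v,B_v)$ alone.

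The second gap is the one you yourself flag as the crux and then defer: the circular pieces. Your accounting---each circular block has exactly two ``deficient'' cliques, absorbed by the two added isolated vertices---already strains when the graph contains several disjoint circular blocks (a disjoint union of long cycles is quasi-line): each block wants two deficient cliques but only two isolated vertices exist, so prey must be chained across blocks in a globally consistent order; worse, a quasi-line graph need not decompose into cleanly separated monolithic blocks at all, since fuzzy circular interval pieces are glued through strips. The paper sidesteps this global bookkeeping with a different inductive frame: a vertex-minimal counterexample cannot contain \emph{any} vertex subset admitting an incremental clique covering (Lemma~\ref{thm:characterization:inc:covering}), and every quasi-line graph does contain one (Lemma~\ref{lem:has:inc:covering}). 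In the simplicial-free, fuzzy-pair-free case that subset is found as a cycle in the bipartite incidence graph between vertices and their two associated cliques; the wrap-around of that cycle makes the covering close up with \emph{no} deficiency, so circular structure consumes none of the budget of~$2$---the budget is spent only once, globally, in the Lundgren--Maybee conversion. Without a substitute for these two lemmas, your induction cannot get off the ground precisely on the pieces you identify as hard.
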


Another way to bound the competition number is to consider the number of holes 
a graph contains.
Recall that a hole is an induced cycle of length at least~$4$. In this context
Kim~\cite{MR2176262} conjectured the following:

\begin{conjecture}[Kim~\cite{MR2176262}]\label{conj:kim}
If a graph has at most~$k$ holes, then it has competition number at most~$k+1$.
\end{conjecture}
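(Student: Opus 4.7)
I would prove the strengthened form stated in the abstract, namely $\cp(G) \le \dim W(G) + 1$, where $W(G) \le Z(G)$ is the subspace of the $\mathbb{F}_2$-cycle space of $G$ spanned by the holes. Since any graph with at most $k$ holes satisfies $\dim W(G) \le k$, this implies Conjecture~\ref{conj:kim}. The argument proceeds by induction on $\dim W(G)$. When $\dim W(G) = 0$ the graph $G$ is chordal and a classical result of Opsut gives $\cp(G) \le 1$. The inductive step rests on the elementary inequality $\cp(G) \le \cp(G - e) + 1$ for any edge $e \in E(G)$: given a DAG realizing $(G - e) \cup I_{\cp(G - e)}$, append a new sink $w$ whose in-neighborhood is the pair of endpoints of $e$; then $w$ is isolated in the new competition graph and $e$ is covered.

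\textbf{Choosing the edge.} It thus suffices to produce an edge $e$ lying on some hole of $G$ such that $\dim W(G - e) \le \dim W(G) - 1$. Deletion of any non-bridge decreases $\dim Z$ by one, and since $W(G)$ contains a hole passing through $e$, the subspace $W(G) \cap Z(G - e)$ already has codimension one in $W(G)$. The required inequality therefore reduces to the inclusion $W(G - e) \subseteq W(G)$. Holes of $G - e$ that are also holes of $G$ lie in $W(G)$ trivially. The delicate case is a \emph{new} hole $H'$ of $G - e$, that is, a cycle chordless in $G - e$ whose unique chord in $G$ is $e$. Splitting the old cycle $H' \cup \{e\}$ of $G$ along $e$ produces two strictly smaller cycles, each of which may be recursively decomposed into chordless pieces, i.e.\ into triangles and holes of $G$.

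\textbf{Main obstacle and contingency.} The principal difficulty is controlling the triangle contributions of this decomposition: the recursive argument immediately places $H'$ only in $W(G) + T(G)$, where $T(G) \le Z(G)$ is the triangle subspace. To upgrade this to $H' \in W(G)$ I would either refine the choice of $e$ (for example pick a hole disjoint from any triangle of $G$, so that its decomposition into chordless pieces cannot produce triangles) or strengthen the inductive statement by quotienting out $T(G)$ and working in $Z(G)/T(G)$. Note that in the triangle-free case this obstacle disappears since $W(G) = Z(G)$ and the argument is clean, which suggests that the triangle bookkeeping is the real combinatorial heart of the proof. As a contingency, if the cycle-space analysis proves intractable, I would fall back on a direct DAG construction: exhibit an edge set $F \subseteq E(G)$ of cardinality at most $\dim W(G)$ whose removal makes $G$ chordal, apply Opsut's chordal construction to $G - F$ with a single added sink, and then append one further sink per element of $F$ whose in-neighborhood is the corresponding pair of endpoints. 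The existence of such $F$ reduces to a hole-by-hole edge-deletion lemma which is itself a combinatorial cousin of the key step above.
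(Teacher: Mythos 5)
Your skeleton is sound as far as it goes: the inequality $\cp(G)\le\cp(G-e)+1$ is correct, the codimension-one argument is correct, and you have correctly reduced everything to finding an edge $e$ on a hole with $\mathcal{H}(G-e)\subseteq\mathcal{H}(G)$. But the obstacle you flag is not triangle bookkeeping; it is fatal to the whole edge-deletion framework, and each of your contingencies fails on a concrete small graph. Let $G_0$ be the $4$-cycle $v_1v_2v_3v_4$ together with, for each edge $v_iv_{i+1}$, a new vertex $u_i$ adjacent exactly to $v_i$ and $v_{i+1}$. Since each $u_i$ has degree $2$ and its two neighbors are adjacent, no hole passes through any $u_i$, so the $4$-cycle is the unique hole and $\dim\mathcal{H}(G_0)=1$. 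Yet deleting any edge of that hole, say $v_1v_2$, creates the $5$-hole $v_1u_1v_2v_3v_4$, so $\dim\mathcal{H}(G_0-e)=1$ for every admissible $e$: no edge with your required property exists, and contingency (a) is vacuous because the unique hole shares every edge with a triangle. The same graph refutes contingency (c): any $F$ with $G_0-F$ chordal must contain an edge of the $4$-cycle, say $v_1v_2$, and must then contain a second edge to destroy the resulting $5$-hole on $\{v_1,u_1,v_2,v_3,v_4\}$, so $|F|\ge 2>\dim\mathcal{H}(G_0)$. Contingency (b) is refuted by the octahedron $K_{2,2,2}$: each of its three $4$-cycle holes is a sum of four triangles (e.g.\ $a_1b_1a_2b_2=(a_1b_1c_1)+(b_1a_2c_1)+(a_2b_2c_1)+(b_2a_1c_1)$), so the hole space lies inside the triangle space and your quotiented statement would assert $\cp(K_{2,2,2})\le 1$; in fact $\cp(K_{2,2,2})=2$, since every neighborhood induces a $C_4$, which needs two cliques to cover its vertices, and Opsut's lower bound $\cp(G)\ge\min_v\theta\bigl(G[N(v)]\bigr)$ (with $\theta$ the vertex clique cover number) applies. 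The octahedron also shows the dimension can refuse to drop even when every hole is "independent": deleting any of its edges creates a new hole outside the span of the old ones, and $\dim\mathcal{H}$ stays at $3$. (A minor separate point: the chordal base case $\cp\le 1$ is due to Roberts, not Opsut.)

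The root cause is that edge deletion can create holes, and no dimension count inside the cycle space absorbs the cascade. The paper avoids this entirely by deleting a \emph{vertex} instead of an edge: removing a vertex never creates a hole, so $\mathcal{H}(G-v)\subseteq\mathcal{H}(G)$ automatically, and the only issue is to charge the deleted vertex's contribution correctly. Concretely, the paper proves that every graph has a \emph{good} vertex $v$, one for which every two nonadjacent neighbors of $v$ lie on a hole through $v$ (a Dirac-style minimal-separator argument); it proves $\cp(G)\le\max\{1,\cp(G-v)\}+\defect(v)$, where $\defect(v)=|N(v)|-\omega(G[N(v)])$, by surgery on a DAG realizing $G-v$; and it observes that for a good vertex the $\defect(v)$ holes through $v$, chosen one per neighbor outside a maximum clique of $N(v)$, each contain a private edge at $v$ and hence extend any basis of $\mathcal{H}(G-v)$, giving $\dim\mathcal{H}(G)\ge\dim\mathcal{H}(G-v)+\defect(v)$. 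Induction on the number of vertices then yields both Kim's conjecture and the hole-space strengthening in one stroke. If you want to rescue an edge-based argument, you would need a progress measure other than $\dim\mathcal{H}$, since the examples above show that quantity can fail to decrease no matter which edge you remove.
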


These conjectures have previously been proven for various graph classes, as summarized at the end of this section. However, both conjectures have remained open until now. 

\paragraph{Our results.} In this paper we prove Conjectures~\ref{conj:opsut}
and~\ref{conj:kim}. To prove Conjecture~\ref{conj:opsut}, we first present an
alternative to Chudnovsky and 
Seymour's~\cite{DBLP:conf/bcc/ChudnovskyS05} structure theorem for quasi-line graphs. In the course of developing a
structure theorem for the more general class of claw-free graphs, they show 
that
every connected quasi-line graph is a fuzzy circular interval graph, or a
composition of fuzzy linear interval strips.

We will show that Chudnovsky and Seymour's theorem implies the following
simpler characterization, proven in Section~\ref{char:sec:quasi-line}.

\begin{theorem}\label{thm:quasi:line:char}
A finite graph~$G$ is a quasi-line graph, if and only if there exists a finite 
graph~$H$, a map~$\phi\colon V(G)\rightarrow V(H)$, and a set of distinct
connected 
subtrees~$\isg:=\{T_1,\ldots,T_t\}$ of~$H$ such that the 
following properties hold: 
\begin{enumerate}
\item If two vertices~$v$ and~$v'$ of~$G$ are adjacent, then there is a 
tree~$T_i \in \isg$ with~$\phi(v),\phi(v')\in
V(T_i)$.\label{prop:if:adj:subgraph}
\item If two vertices~$v$ and~$v'$ of~$G$ are not adjacent 
and~$\phi(v),\phi(v')\in V(T_i)$ for some~$T_i \in \isg$, then~$T_i$ is a path with distinct
endpoints~$\phi(v)$ and~$\phi(v')$. \label{prop:fuzzy:means:pathends}
\item If~$v_1$ is not adjacent to~$v_1'$ and~$v_2$ is not adjacent to~$v_2'$,
but~$\phi(v_1) = \phi(v_2)$,~$\phi(v_1),\phi(v_1')\in V(T_i)$
and~$\phi(v_2),\phi(v_2')\in V(T_j)$ for trees~$T_i$ and~$T_j$ in~$\isg$, then~$i = j$.
\label{prop:paired:fuzzyness} 
\item Every vertex~$u\in V(H)$ of degree at least~$3$ is contained in at most 
one tree~$T_i \in \isg$.\label{prop:only:one:H:if:deg:greater:2}
\end{enumerate}
\end{theorem}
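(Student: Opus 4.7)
The plan is to prove the two directions of the equivalence separately, deducing sufficiency directly from properties~1--4 and necessity from the Chudnovsky--Seymour structure theorem.

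For sufficiency, fix $v \in V(G)$ with $h = \phi(v)$, and show that $N(v)$ partitions into at most two cliques. A first observation is that if $u_1, u_2 \in N(v)$ satisfy $\phi(u_1) = \phi(u_2)$ then they are adjacent: by property~1 applied to $(v, u_1)$ they lie in a common tree, and property~2 would otherwise require distinct endpoint-images. Hence non-edges inside $N(v)$ connect only vertices with distinct $\phi$-images. When $\deg_H(h) \ge 3$, property~4 confines $N(v)$ to $\phi^{-1}(V(T_i))$ for a single tree $T_i$; property~2 then forces every non-edge of $N(v)$ to have its endpoints mapped to the two endpoints of the path $T_i$, yielding the bipartition directly, with interior preimages assignable to either side.

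The harder case is $\deg_H(h) \le 2$, where several trees may share $h$. Property~3 is the key lever here: it attaches to each $H$-vertex a unique ``fuzzy partner tree'' through which any non-adjacency among preimages of $h$ must be witnessed. Together with the bound $\deg_H(h) \le 2$, this permits a 2-coloring of $N(v)$ according to the local orientation at $h$---roughly, which of the two $H$-neighbors of $h$ the tree carrying a given neighbor of $v$ extends toward---after which one verifies that each color class is a clique. I expect the case analysis here to be the principal obstacle, since non-edges in $N(v)$ can arise in two qualitatively different ways: from property~2 inside a shared tree, or from the complete absence of any shared tree.

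For necessity, I would invoke the Chudnovsky--Seymour structure theorem: every connected quasi-line graph is either a fuzzy circular interval graph or a composition of fuzzy linear interval strips (disconnected graphs are handled componentwise). In the fuzzy circular interval case, take $H$ to be a cycle refined enough to carry a vertex at every interval endpoint, let $\phi$ be the natural projection, and let the trees be the arcs of $H$ corresponding to the intervals; property~4 is vacuous because $H$ has maximum degree~$2$, while properties~1--3 transcribe the fuzzy adjacency rule, with property~3 encoding the consistency of fuzziness at a shared endpoint. In the composition case, glue the backbone paths of the linear strips at their attachment cliques: each attachment vertex of $H$ then acquires degree at least~$3$ and lies in only one strip-tree, so property~4 holds automatically. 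The main obstacle here is the translation step---reading off the Chudnovsky--Seymour ``fuzzy pair'' data as property~3, and checking that the gluing at attachments is compatible with property~2.
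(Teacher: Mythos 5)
Your two directions fare differently against the paper, so I address them separately.

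For the direction ``reconstruction implies quasi-line'' your route is genuinely different from the paper's, and I believe it can be completed. The paper does not argue directly from Properties~\ref{prop:if:adj:subgraph}--\ref{prop:only:one:H:if:deg:greater:2}: it first normalizes to a \emph{pleasant} reconstruction (Lemma~\ref{lem:has:pleasant:reconst}, a sequence of subdivision and remapping surgeries which in particular force every image~$\phi(v)$ to have degree at most~$2$ in~$H$), proves the uniqueness statement of Lemma~\ref{lemma:unique:Hew}, and only then splits~$N(v)$ into two cliques; your plan performs the split on the raw reconstruction with a case distinction on~$\deg_H(\phi(v))$. Your degree-$\geq 3$ case is correct as stated. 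In the degree-$\leq 2$ case the orientation colouring does work, but the verification you defer is exactly where the content lies: one shows that two neighbours of~$v$ whose routing paths leave~$h=\phi(v)$ through the same neighbour of~$h$ must be adjacent (otherwise the vertex where the two routing paths diverge would have degree at least~$3$ and lie in two trees, contradicting Property~\ref{prop:only:one:H:if:deg:greater:2}, while the degenerate alternatives contradict Property~\ref{prop:fuzzy:means:pathends}), and that Property~\ref{prop:paired:fuzzyness} forces any non-neighbour in~$N(v)$ of a vertex with image~$h$ to be routed along the unique path witnessing fuzziness at~$h$, so the image-$h$ vertices can all be placed in the colour class opposite to that path's direction. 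Carried out, this is shorter and more self-contained than the paper's proof of this direction; what it does not produce is the pleasant-reconstruction and~$T_{h,e}$ machinery, which the paper reuses as the engine of Section~\ref{sec:quasi-line} (Lemmas~\ref{lem:fuzzyness:removal}--\ref{lem:has:inc:covering}), so the paper's detour is an investment rather than overhead.

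For the converse direction you follow the paper (Chudnovsky--Seymour, circular case, composition of strips), and here your plan has a genuine gap in the composition case. Before gluing, each strip reconstruction must be re-normalized so that its ends not only have images shared with no other vertex but also \emph{occur in no fuzzy pair}; this is precisely the paper's Lemma~\ref{lem:reconst:end:not:in:fuz}, achieved by subdividing the edge of~$H$ next to an offending end and splitting the witnessing path into two trees. Without this step, Property~\ref{prop:fuzzy:means:pathends} genuinely fails after gluing: if an end~$a$ forms a fuzzy pair with a vertex~$w$ of its strip, the strip tree ending at~$\phi(a)$ contains~$\phi(w)$ although~$w$ is not adjacent to~$a$, so the cross-seam tree through the attachment point contains~$\phi(w)$ together with the images of the other strip's attachment clique, whose preimages are not adjacent to~$w$ in the composition, and that tree is not a path with the relevant images as endpoints. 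You flag ``compatibility with Property~\ref{prop:fuzzy:means:pathends}'' as the obstacle, but the fix is a missing idea, not a routine check. (A minor inaccuracy, harmless here: an attachment vertex need not acquire degree at least~$3$, e.g.\ when a component of~$G_0$ is a single pair; this does not matter, because Property~\ref{prop:only:one:H:if:deg:greater:2} constrains only vertices that do have degree at least~$3$.)
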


We call any triple~$(H, \phi, \isg)$, consisting of the graph~$H$ 
together with the map~$\phi$ and the family~$\isg$ of subtrees, \emph{a fuzzy 
reconstruction} of the quasi-line graph~$G$.
We call a pair of distinct vertices~$\{v,v'\} 
\subseteq V(G)$
\emph{fuzzy} if the pair satisfies the assumption of Property~\ref{prop:fuzzy:means:pathends}, i.e, if the two vertices are non-adjacent and there is a 
tree~$T\in \isg$ with~$\phi(v),\phi(v')\in V(T)$.
In this case~$T$ is a 
path with endpoints~$\phi(v)$ and~$\phi(v')$. Figure~\ref{figure1} illustrates a reconstruction of a graph~$G$ and its fuzzy vertex pairs.

\begin{figure}[h!t]
\centering
\unitlength 0.9cm
\begin{picture}(16,18.5)(0,0)
\put(1.5,0){\includegraphics[scale=0.81]{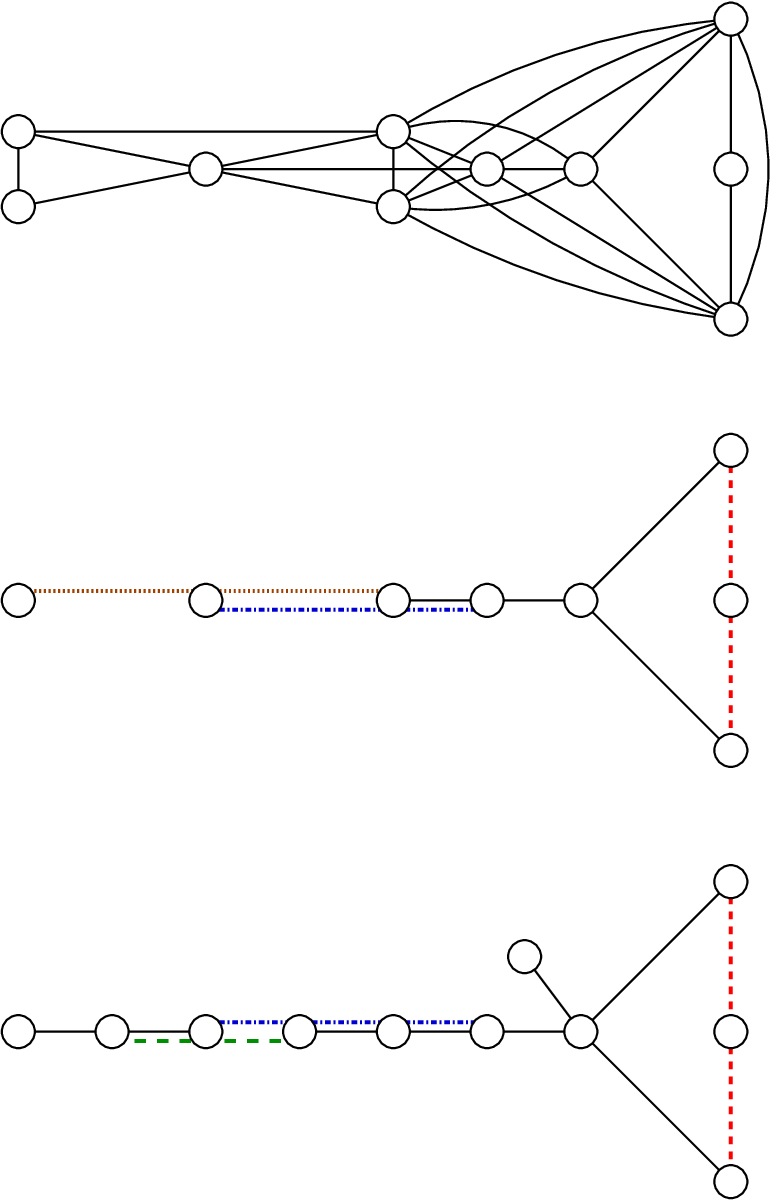}}
\put(7,13.3){\large$G$}
\put(6.1,7.2){\large$(H,\phi,\isg)$}
\put(5.9,0.5){\large$(H',\phi',\isg')$}
\put(1,12){%
\put(0.7,2.4){1}
\put(0.65,4.7){2}
\put(3.55,3.0){3}
\put(6.35,2.4){4}
\put(6.4,4.7){5}
\put(8.0,4.0){6}
\put(9.7,3.6){7}
\put(12.1,1.3){8}
\put(11.0,3.6){9}
\put(12.0,5.9){10}
}
\put(0.7,5.5){%
\put(0.1,4.2){$\phi(1){=}\phi(2)$}
\put(3.6,4.2){$\phi(3)$}
\put(5.8,4.2){$\phi(4){=}\phi(5)$}
\put(7.9,2.9){$\phi(6)$}
\put(9.0,4.1){$\phi(7)$}
\put(12.35,1.25){$\phi(8)$}
\put(12.35,3.55){$\phi(9)$}
\put(12.35,5.9){$\phi(10)$}
}
\put(0.9,-1){%
\put(0.5,4.1){$\phi'(1)$}
\put(1.9,4.1){$\phi'(2)$}
\put(3.3,4.1){$\phi'(3)$}
\put(6.2,4.1){$\phi'(4)$}
\put(4.75,4.1){$\phi'(5)$}
\put(7.6,2.8){$\phi'(6)$}
\put(8.2,5.2){$\phi'(7)$}
\put(12.15,1.2){$\phi'(8)$}
\put(12.15,3.45){$\phi'(9)$}
\put(12.15,5.75){$\phi'(10)$}
}

\end{picture}
\caption{Example of a quasi-line graph $G$ together with a reconstruction
   $(H,\phi,\isg)$ that uses four subtrees.  The fuzzy pairs are $\{1,4\}$, $\{1,5\}$,
   and $\{2,4\}$.  A second example of a reconstruction of
   $G$ is $(H',\phi',\isg')$. It uses five subtrees, has no fuzzy pairs and is pleasant (as defined in Section~\ref{char:sec:quasi-line}).\label{figure1}}
\end{figure}

Note that it is not necessarily
possible to deduce~$G$ from the triple~$(H, \phi, \isg)$.
To easily apply fuzzy reconstructions in our proofs, we also show in Section~\ref{char:sec:quasi-line} that quasi-line graphs have fuzzy reconstructions with various additional properties. We call these pleasant reconstructions.
Using fuzzy reconstructions we prove the conjecture of Opsut in
Section~\ref{sec:quasi-line}.

In Section~\ref{sec:competition_number_bound} we then prove
Conjecture~\ref{conj:kim}. Our proof method even allows us to prove a stronger 
form of the conjecture (Theorem~\ref{thm:dimension:hole:space}) that was recently
considered in~\cite{ComHoleSpace}.

\paragraph{Related work.}

Several partial results, proving Opsut's conjecture for certain graph classes, 
have been previously obtained. In particular it has been proved for 
quasi-line graphs for which there are coverings of the neighborhood of all adjacent vertices that are compatible in a certain sense~\cite{MR1041628}. This result was extended to all non-critical quasi-line graphs~\cite{MR1206561}. In this context, critical graphs are all quasi-line graphs in which every clique~$C$ has a vertex whose neighbors outside~$C$ cannot be covered by a single clique.
Subsequently the conjecture has also been proven for bubble-free 
series-parallel graphs~\cite{wang95}.
Opsut's original paper~\cite{MR679638}, 
which contains Conjecture~\ref{conj:opsut}, shows an upper and a lower bound 
in terms of the clique edge cover number. The clique edge cover number of a 
graph is the least number of cliques that together cover every edge. Assuming 
his conjecture, one can use his bounds to deduce that every quasi-line graph 
on~$n$ vertices has a clique edge cover of size at most~$n$. 
Chen, Jacobson, K\`{e}zdy, Lehel, Scheinerman and Wang~\cite{Chen200017} 
proved that indeed every quasi-line graph has such a clique edge covering. 
Thus, by proving Conjecture~\ref{conj:opsut}, we obtain an alternative proof 
for their result.

Concerning Conjecture~\ref{conj:kim}, there are numerous papers that contain
partial results. For graphs without holes the conjecture was already proven in 
the original paper that introduced competition numbers~\cite{Robertsfoodwebs}. 
This result was 
extended to graphs with at most one hole~\cite{MR2181040} and then to graphs 
with at most two holes~\cite{MR2656247,boli}. The conjecture was also proven for 
graphs in which the intersection of holes is restricted in a certain way. More 
specifically the conjecture was proven for graphs in which holes either 
share at most one vertex, or share an edge and contain at least~$5$ 
vertices~\cite{MR2510214}. It was also proven for graphs with mutually 
edge disjoint holes~\cite{MR2651995} and for graphs in which each hole has an 
edge that is not contained in any other hole~\cite{MR2563281}.
Theorem~\ref{thm:dimension:hole:space}, the stronger form of Conjecture~\ref{conj:kim}, was recently shown to hold for most graph classes for which 
Conjecture~\ref{conj:kim} was known to hold~\cite{ComHoleSpace}.

\paragraph{Notation.}

Throughout the paper we use finite simple graphs that may either have directed edges, which we call arcs, or undirected edges.
By~$V(G)$ and~$E(G)$ we denote vertices and edges of a graph~$G$, respectively. For a subset of vertices~$V'\subseteq V(G)$ we let~$G[V']$ be the subgraph 
of~$G$ induced by the vertices in~$V'$. For a vertex~$v$ in an undirected graph, we let~$N_G(v) := \{u\mid \{u,v \}\in E(G)\}$  be
the \emph{open neighborhood} of~$v$ in~$G$. 
We omit the index~$G$ if the graph is 
apparent from the context. 
A \emph{clique} is a set of vertices that induces a complete graph.
Recall that a 
\emph{simplicial vertex} is a vertex whose neighborhood forms a clique.
A set of cliques~$C_1,\ldots,C_t$ \emph{covers} a set of edges~$E$ if for every edge~$e\in E$ there is an~$i\in\{1,\ldots,t\}$ such that~$e$ is an edge in the graph induced by~$C_i$.

For~$S\subseteq V(G)$, we define~$G-S := 
G[V(G)\setminus S]$ to be the graph obtained by removing vertices in~$S$ and
all edges incident with a vertex in~$S$. When 
removing a single vertex~$v$ we write~$G-v := G-\{v\}$.
A \emph{vertex separator} in a graph~$G$ is a (possibly empty) subset of the 
vertices~$S\subseteq V(G)$ such that~$G-S$ is a disconnected graph. 

\section{A characterization of quasi-line graphs} \label{char:sec:quasi-line}

In this section we prove Theorem~\ref{thm:quasi:line:char}, which says that
a graph has a fuzzy reconstruction if and only if it is a quasi-line graph.
When referring to a fuzzy reconstruction, in the rest of the paper, we 
frequently omit the word fuzzy.
When dealing with a reconstruction~$(H, \phi, \isg)$, to improve readability, 
we use the letters~$u$,~$v$ and~$w$ to denote vertices of the graph~$G$ and we 
use the letters~$h$ and~$g$ or expressions of the form~$\phi(v)$ to denote 
vertices of the graph~$H$. 
With the notion of fuzzy vertex pairs, we can reformulate Property~\ref{prop:paired:fuzzyness}. More precisely, if Properties~\ref{prop:if:adj:subgraph},~\ref{prop:fuzzy:means:pathends}
and~\ref{prop:only:one:H:if:deg:greater:2} hold, then 
Property~\ref{prop:paired:fuzzyness} is equivalent to the
following statement:

\newcounter{listcounter}
\begin{list}{$\arabic{listcounter}'$.}{\usecounter{listcounter}}
\setcounter{listcounter}{2} 
\item If~$\{v_1,v_1'\}\subseteq V(G)$ is a fuzzy vertex pair, then there is 
exactly one tree~$T \in \isg$ that contains~$\phi(v_1)$ and~$\phi(v_1')$. 
Furthermore, if~$\{v_2,v_2'\}\subseteq V(G)$ is also a fuzzy vertex pair, then 
the sets~$\{\phi(v_1),\phi(v_1')\}$ and~$\{\phi(v_2),\phi(v_2')\}$ are either 
equal or disjoint.
\label{prop:three:prime}

\end{list}

We recall several definitions from~\cite{DBLP:conf/bcc/ChudnovskyS05} (fixing the minor typo~$\{u,v\}$ to~$\{\phi(u),\phi(v)\}$). 

\nicebreak

\begin{definition}[Chudnovsky, Seymour \cite{DBLP:conf/bcc/ChudnovskyS05}]
A graph~$G$ is a \emph{fuzzy circular interval graph} if
\begin{itemize}
\item there is a map~$\phi$ from~$V(G)$ to a circle~$C$, and
\item there is a set of non-trivial closed intervals from~$C$, none including another, and such that no point of~$C$ is the end of more than one of 
the intervals, so that
\item for~$u,v$ in~$G$, if~$u,v$ are adjacent, then~$\{\phi(u),\phi(v)\}$ is a 
subset of one of the intervals,
and if~$u, v$ are non-adjacent, then~$\phi(u), \phi(v)$ are both ends of any 
interval that includes
both of them (and in particular, if~$\phi(u) = \phi(v)$, then~$u,v$ are 
adjacent).
\end{itemize}
\end{definition}

\begin{lemma}\label{lem:fuzzy:circ:has:resonstr}
Every fuzzy circular interval graph has a reconstruction.
\end{lemma}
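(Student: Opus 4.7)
The plan is to discretize the circle $C$ into a cycle graph $H$ and turn each interval into a subpath of that cycle. Write $\phi_C \colon V(G) \to C$ for the map into the circle and $\mathcal{I}$ for the set of intervals coming with the fuzzy circular interval graph structure. Let $P \subseteq C$ consist of all points of $\phi_C(V(G))$ together with the endpoints of every interval in $\mathcal{I}$, and (if necessary) enlarge $P$ by one or two auxiliary points on $C$ so that $|P|\ge 3$. Take $V(H) := P$ and join two vertices of $H$ by an edge iff they are cyclically consecutive in $P$, so that $H$ is a single cycle; define $\phi\colon V(G)\to V(H)$ as the map induced by $\phi_C$. For each interval $I\in\mathcal{I}$, let $T_I$ be the subpath of $H$ consisting of those points of $P$ that lie in $I$, listed in their cyclic order, and set $\isg := \{T_I : I \in \mathcal{I}\}$.

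The verification of the four conditions is essentially a direct translation of the definition. Property~\ref{prop:if:adj:subgraph} holds because adjacency of $u,v$ in $G$ means $\{\phi_C(u),\phi_C(v)\}$ lies in some interval $I$, hence in $V(T_I)$. For Property~\ref{prop:fuzzy:means:pathends}, if $u$ and $v$ are non-adjacent and $\phi(u),\phi(v)\in V(T_I)$, then both images lie in $I$, and the non-adjacency clause of the fuzzy circular interval definition forces them to be the two ends of $I$; this makes $T_I$ a path with distinct endpoints $\phi(u)$ and $\phi(v)$, distinctness being guaranteed by the ``if $\phi(u)=\phi(v)$ then $u,v$ are adjacent'' clause. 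Property~\ref{prop:paired:fuzzyness} is where the hypothesis that no point of $C$ is an endpoint of more than one interval pays off: each fuzzy pair $\{v,v'\}$ is matched with a unique interval $I$ whose endpoints are precisely $\phi(v)$ and $\phi(v')$, so two fuzzy pairs whose $\phi$-image sets share any point must arise from the same interval and hence from the same $T_i$. Finally, Property~\ref{prop:only:one:H:if:deg:greater:2} is vacuous because every vertex of the cycle $H$ has degree exactly $2$.

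The remaining bookkeeping is mild: distinct intervals share no endpoint, so the paths $T_I$ are pairwise distinct subpaths of $H$ and in particular distinct connected subtrees; auxiliary vertices added to enforce $|P|\ge 3$ lie on the cycle but in none of the $T_I$, so they do not interfere with any condition; and the degenerate case in which $G$ has no edges is handled by taking $\mathcal{I}$ (and hence $\isg$) empty. Honestly I do not expect a serious obstacle in the proof: the only mild subtlety is observing that the ``no shared endpoints'' clause of the fuzzy circular interval definition is exactly the combinatorial rigidity encoded by Property~\ref{prop:paired:fuzzyness}, so once the construction is set up, each of the four conditions essentially certifies itself.
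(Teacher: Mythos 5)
Your proof is correct and follows essentially the same approach as the paper: discretize the circle into a cycle graph $H$, turn each interval into a subpath in $\isg$, and verify the four properties, with Property~\ref{prop:paired:fuzzyness} following from the no-shared-endpoints clause and Property~\ref{prop:only:one:H:if:deg:greater:2} being vacuous since $H$ has maximum degree~$2$. The only (harmless) difference is that you add the interval endpoints and auxiliary points as extra vertices of $H$, whereas the paper takes $V(H)=\phi(V(G))$ directly.
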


\begin{proof}
Let~$\phi$,~$C$ and a set of intervals in~$C$ be given that satisfy the 
properties in the definition of a fuzzy circular interval graph~$G$.
Define~$H$ to be the graph on the vertex set~$\phi(V(G))$ in which two 
vertices are adjacent if they are consecutive on the circle~$C$.
Let~$\isg$ be the set of paths of~$H$ that are induced by the intersection 
of~$V(H)$ with one of the given intervals. 

We claim that the triple~$(H,\phi,\isg)$ is a reconstruction of~$G$. 
Properties~\ref{prop:if:adj:subgraph},~\ref{prop:fuzzy:means:pathends} 
and~\ref{prop:only:one:H:if:deg:greater:2} follow from the definition of 
fuzzy circular interval graphs. Property~\ref{prop:paired:fuzzyness} follows 
from the fact that all intervals have distinct endpoints.
\end{proof}

\begin{definition}[Chudnovsky, Seymour \cite{DBLP:conf/bcc/ChudnovskyS05}]
A graph~$G$ with two distinguished vertices~$a$ and~$b$ in~$V(G)$ is a 
\emph{fuzzy linear interval strip} if
\begin{itemize}
\item $a$ and~$b$ are simplicial,
\item there is a map~$\phi$ from~$V(G)$ to a line~$L$, 
\item there is a set of non-trivial closed intervals in~$L$, none including 
another, and such that no point of~$L$ is the end of more than one 
of the intervals, so that
\item for~$u,v$ in~$G$, if~$u,v$ are adjacent, then~$\{\phi(u),\phi(v)\}$ is a 
subset of one of the intervals,
and if~$u, v$ are non-adjacent, then~$\phi(u), \phi(v)$ are both ends of any 
interval including
both of them (and in particular, if~$\phi(u) = \phi(v)$, then~$u,v$ are 
adjacent) and
\item 
$\phi(a)$ and~$\phi(b)$ are different from~$\phi(v)$ for all other 
vertices~$v$ of~$G$. 
\end{itemize}

\end{definition}

We refer to the vertices~$a$ and~$b$ in the definition as ends of the strip, 
and denote by~$(G, a, b)$ the strip with these ends.

\begin{lemma}\label{lem:reconst:end:not:in:fuz}
If~$G$ is a fuzzy linear interval strip, then~$G$ has a 
reconstruction~$(H,\phi,\isg)$ for which~$H$ is a path and the ends do not 
have the same image under~$\phi$ as any other vertex of~$G$ and no end is part of a fuzzy pair.
\end{lemma}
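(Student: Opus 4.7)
The plan is to emulate the construction from Lemma~\ref{lem:fuzzy:circ:has:resonstr} with extra care at the ends. Let $\phi$, $L$ and $\mathcal{I}$ be the data of the fuzzy linear interval strip $(G,a,b)$. The goal is to produce a reconstruction $(H,\isg,\phi')$ in which $H$ is a path, $\phi'(a)$ and $\phi'(b)$ have unique preimages, and neither $a$ nor $b$ is in a fuzzy pair.

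The first ingredient is a preprocessing of the strip. If some $I_j\in\mathcal{I}$ has $\phi(a)$ as an endpoint together with the image $\phi(w)$ of a non-neighbor $w$ of $a$, then every vertex $u$ whose image is strictly interior to $I_j$ must be adjacent to $a$: otherwise the strip condition would force $\{\phi(a),\phi(u)\}$ to be the endpoint pair of $I_j$, contradicting the interiority of $\phi(u)$. By simpliciality of $a$, these interior neighbors together with $a$ form a clique. I would therefore replace $I_j$ by two new intervals: a shrunken $I_j^{-}$ whose left endpoint lies generically between $\phi(a)$ and the next image to its right, and a new interval $I_a$ with left endpoint $\phi(a)$ and right endpoint generically just to the left of $\phi(w)$, covering exactly $\phi(a)$ and the clique of interior neighbors. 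With generic choices the strip conditions (pairwise distinct endpoints, no containment, non-triviality) are preserved, and neither $I_j^{-}$ nor $I_a$ contains a non-adjacent pair. Iterating for $a$ and symmetrically for $b$ yields a representation in which no end is in a strip-level fuzzy pair, up to the subtle case when some neighbor $v$ of $a$ has $\phi(v)=\phi(w)$; this case I handle by refining $H$ as below.

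For the construction, I take $H$ to be the path whose vertices are $\phi(V(G))$ ordered by position on $L$, with consecutive vertices adjacent; in the subtle case above I refine by splitting the shared image into two consecutive vertices of $H$, one assigned to $v$ and one to $w$, so that separate trees can certify $a\sim v$ and $v\sim w$ without placing $a$ and $w$ in a common tree. I take $\isg$ to consist of the subpaths $T_j$ corresponding to the (modified) intervals $I_j$. Properties~\ref{prop:if:adj:subgraph}--\ref{prop:paired:fuzzyness} are verified essentially as in Lemma~\ref{lem:fuzzy:circ:has:resonstr}, using the strip axioms together with the distinctness of interval endpoints; Property~\ref{prop:only:one:H:if:deg:greater:2} is vacuous because $H$ is a path. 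The uniqueness of $\phi'(a)$ and $\phi'(b)$ is inherited from the strip definition, and any reconstruction-level fuzzy pair $\{a,v\}$ would translate to a strip-level fuzzy pair, which has been eliminated.

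The main obstacle I anticipate lies in the preprocessing step, specifically handling several intervals that share $\phi(a)$ as an endpoint simultaneously and carrying out the refinement of $H$ consistently across all trees. The key conceptual lever throughout is simpliciality of the end vertex, which forces the interior of each problematic interval to be a clique of neighbors of that vertex and allows the clean separation of $a$ from its non-neighbors.
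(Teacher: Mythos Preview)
Your approach is correct and lands on the same construction as the paper, just organized differently. The paper first builds the reconstruction exactly as in Lemma~\ref{lem:fuzzy:circ:has:resonstr}, and then, if an end $a$ lies in a fuzzy pair $\{a,v\}$ witnessed by a tree $T$, it subdivides the edge of $T$ incident with $\phi(v)$, moves every $w\in\phi^{-1}(\phi(v))\cap N(a)$ to the new vertex $h'$, and replaces $T$ by $T-\phi(v)$ and $T-\phi(a)$. This is exactly your ``subtle case'' refinement, applied uniformly rather than as a fallback after interval surgery. Working entirely at the reconstruction level sidesteps the bookkeeping you flag (no-containment, distinct endpoints, generic choices) and makes the consistency of the refinement across other trees explicit.

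Two of your worries dissolve. First, the strip axioms already forbid two intervals sharing an endpoint, so there is at most one interval with endpoint $\phi(a)$; your anticipated obstacle of ``several intervals that share $\phi(a)$'' cannot occur. Second, simpliciality is not the real lever: every vertex whose image lies strictly inside $I_j$ is a neighbor of $a$ \emph{and} these interior vertices are pairwise adjacent already by the strip's endpoint condition for non-adjacent pairs (any non-adjacent pair in $I_j$ must sit at $\{\phi(a),\phi(w)\}$). So the argument goes through using only Property~\ref{prop:fuzzy:means:pathends}, which is why the paper's proof never invokes simpliciality.
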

\begin{proof}
Let~$\phi$ and a set of subintervals of a line~$L$ be given that satisfy the 
properties in the definition of a fuzzy linear interval strip.

Define~$H$ to be the graph on the vertex set~$\phi(V(G))$ in which two 
vertices are adjacent, if they are consecutive on the line~$L$. 
Note that~$H$ is a path.

Let~$\isg$ be the set of subtrees of~$H$ that are induced by the intersection 
of~$V(H)$ with one of the given intervals. We claim that the 
triple~$(H,\phi,\isg)$ is a reconstruction of~$G$. 
Properties~\ref{prop:if:adj:subgraph}--\ref{prop:only:one:H:if:deg:greater:2} 
follow as in the proof of Lemma~\ref{lem:fuzzy:circ:has:resonstr}.
The fact that the ends do not have the same image under~$\phi$ as any other 
vertex of~$G$ follows from the corresponding additional requirement in the 
definition of fuzzy linear interval strip.

It remains to fulfill the requirement that ends are not part of fuzzy pairs. Let~$a\in V(G)$ be an end of the strip. 
We alter the reconstruction so that end~$a$ is not part of a fuzzy pair. Repeating the alteration with the other end then yields a reconstruction with the required property. If~$a$ is not part of a fuzzy pair, no alteration is required. 
Thus, suppose~$\{a,v\}$ is a fuzzy pair and~$T\in \isg$ is a tree such that~$\phi(a),\phi(v')\in V(T)$. Note that~$T$ is a path with distinct endpoints~$\phi(a)$ and~$\phi(v)$. Since in the fuzzy linear interval representation there is only one interval that has~$\phi(a)$ as an endpoint, there is only one such tree~$T$. Let~$h\in V(T)$ 
be the neighbor of~$\phi(v)$ in~$T$. We subdivide the 
edge~$\{\phi(v),h\}$, adding a new vertex~$h'$, in~$H$ as well as in every 
tree~$T'\in \isg$ that contains the 
edge~$\{\phi(v),h\}$. We then alter~$\phi$ by setting~$\phi(w) = h'$ for all 
vertices in~$w\in \phi^{-1}(\phi(v)) \cap N(a)$. We replace~$T$ by two 
new trees~$T_1 := T - \phi(v)$ and~$T_2 := T - \phi(a)$.
If a tree~$T'\in \isg \setminus \{T\}$ originally contained~$\phi(v)$ but 
not~$h$, we add~$h'$ and the edge~$\{\phi(v),h'\}$ to~$T'$.
This gives a new reconstruction of~$G$ in which~$a$ is not part of a fuzzy pair.
Note that the property that ends do not 
have the same image under~$\phi$ as any other vertex of~$G$ is maintained.
\end{proof}

\begin{definition}[Chudnovsky, Seymour \cite{DBLP:conf/bcc/ChudnovskyS05}]
The \emph{composition of two strips} is defined as follows:
Suppose that~$(G, a, b)$ and~$(G', a', b')$ are two strips. We compose them as 
follows.
Let~$A,B$ be the set of vertices of~$G - \{a,b\}$ adjacent in G to~$a, b$ 
respectively, and
define~$A',B'$ similarly. 
Take the disjoint union of~$G - \{a,b\}$ and~$G'- 
\{a',b'\}$ and let~$H$ be the graph obtained from this by adding all possible 
edges between~$A$ and~$A'$ and between~$B$ and~$B'$. Then~$H$ is the 
composition of the two strips.

In general the \emph{composition of strips} is defined as follows:
Start with a graph~$G_0$ with an even number of vertices and which is the disjoint union of complete
graphs, and pair the vertices of~$G_0$. Let the pairs be~$(a_1,b_1),\ldots, (a_n,b_n)$, say. 
For~$i= 1,\ldots,n$, let~$(G_i',a_i',b_i')$ be a strip.
For~$i= 1,\ldots,n$, let~$G_i$ be the graph obtained by 
composing~$(G_{i-1}, a_i, b_i)$ and~$(G'_i, a'_i, b'_i)$; 
then the resulting graph~$G_n$ is called  a composition of the 
strips~$(G_i',a_i',b_i')$~$(1 \leq i \leq n)$.

\end{definition}

The structure theorem of Chudnovsky and 
Seymour says that every connected quasi-line graph is a fuzzy circular 
interval graph, or a composition of fuzzy linear interval strips.

\begin{lemma}\label{lem:fuzzy:lin:has:reconstr}
A graph that is the composition of fuzzy linear interval strips has a 
reconstruction.
\end{lemma}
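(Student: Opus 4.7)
The plan is to glue strip reconstructions into a reconstruction of the composition, using a small gadget in $H$ for each complete-graph component of the skeleton $G_0$.

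For each strip $(G_i',a_i',b_i')$, Lemma~\ref{lem:reconst:end:not:in:fuz} gives a reconstruction $(H_i,\isg_i,\phi_i)$ in which $H_i$ is a path, the two endpoints $\phi_i(a_i')$ and $\phi_i(b_i')$ are the images of no other vertex, and neither of them lies in a fuzzy pair. For an end $y\in\{a_i',b_i'\}$, let $P_i^y$ be the longest tree of $\isg_i$ containing $\phi_i(y)$; since every tree of $\isg_i$ that contains $\phi_i(y)$ is a subpath of $H_i$ with $\phi_i(y)$ as a leaf, the family of such trees is totally ordered by inclusion and $P_i^y$ is well-defined. Because $y$ lies in no fuzzy pair, one checks that a vertex $v\ne y$ of $G_i'$ satisfies $\phi_i(v)\in V(P_i^y)$ exactly when $v\in N_{G_i'}(y)$.

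I take $H$ to be the disjoint union of the paths $H_i$ together with, for each complete-graph component $K$ of $G_0$ involving ends from at least two different pairs, a gadget connecting the relevant endpoints of the $H_i$'s; for $x\in K$, let $h_x$ denote the endpoint of $H_{i}$ (where pair $i$ contains $x$) that is the image of the corresponding strip end under $\phi_i$. When no pair has both of its ends in $K$, the gadget is just a single new central vertex $c_K$ adjacent to every $h_x$ with $x\in K$. I set $\phi(v):=\phi_i(v)$ for $v\in V(G_i')\setminus\{a_i',b_i'\}$ and let $\isg$ consist of each tree of each $\isg_i$ (regarded as a subtree of $H$) together with, for each such multi-pair component $K$, a new tree $T_K$ obtained by unioning the arms $P_i^y$ for the corresponding strip ends $y$ through the gadget. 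Verifying Properties~\ref{prop:if:adj:subgraph}--\ref{prop:only:one:H:if:deg:greater:2} is then largely routine: within-strip adjacencies come from $\isg_i$; cross-strip adjacencies, which only occur between neighborhoods of ends from different pairs sharing a component $K$, are covered by $T_K$; the hubs $c_K$ are the only vertices of $H$ of degree at least three, and each of them lies only in the associated $T_K$; and two images in $T_K$ lying in different arms come from cross-strip adjacent vertices in $G_n$, while two images in the same arm come from a single clique $N(y)$.

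The main obstacle, and the expected technical heart of the argument, is the residual case where a pair $(a_i,b_i)$ has both of its ends in a component $K$ that also contains ends from other pairs. Then there may be vertices $v\in N_{G_i'}(a_i')$ and $v'\in N_{G_i'}(b_i')$ which are non-adjacent in $G_n$, because the composition introduces no direct edges between the two sides of strip $i$, yet the naive hub gadget forces $\phi(v)$ and $\phi(v')$ into the same tree $T_K$; since that tree in general has at least three arms and is not a path, Property~\ref{prop:fuzzy:means:pathends} fails. To handle this case I refine the gadget by splitting the hub: the arms $P_i^{a_i'}$ and $P_i^{b_i'}$ attach to distinct sub-hubs linked only through a short path of degree-two connector vertices, and $T_K$ is replaced by several trees, each of which avoids one of the two arms. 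Showing that such a refined gadget can always be arranged so that all four reconstruction properties hold simultaneously is the core of the proof.
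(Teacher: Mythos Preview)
Your gluing strategy is the paper's: assemble $H$ from the paths $H_i$, retain strip trees, and add one large tree per component of $G_0$ to cover the cross-strip adjacencies. The paper's version is slightly leaner. It introduces no separate hub vertex; instead the end-images $\phi_i(a_i'),\phi_i(b_i')$ are joined pairwise within each component, and $T_C$ is taken to be any subtree of $H$ spanning the union of the end-containing trees from the relevant $\isg_i$'s. Those end-containing trees are then \emph{dropped} from $\isg$ rather than kept alongside $T_C$ as you do (keeping them is harmless but redundant). Properties~\ref{prop:fuzzy:means:pathends} and~\ref{prop:paired:fuzzyness} are dispatched in one line by asserting that all preimages of vertices of $T_C$ are pairwise adjacent in $G_n$, so $T_C$ contributes no fuzzy pairs at all; Property~\ref{prop:only:one:H:if:deg:greater:2} holds because only end-images can acquire degree $\ge 3$ and each lies in a single $T_C$. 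The paper does not single out your ``residual case.''

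Your proposal, however, is incomplete precisely where you say it matters. You correctly isolate the case in which a pair $(a_i,b_i)$ lies inside a multi-pair component and $N_{G_i'}(a_i')$, $N_{G_i'}(b_i')$ are not mutually complete, but you do not actually construct the promised refined gadget; you explicitly defer this as ``the core of the proof.'' Worse, the sketch you give runs into Property~\ref{prop:only:one:H:if:deg:greater:2}: if $T_K$ is replaced by several trees each omitting one of the two bad arms $\alpha,\beta$, then any third arm $\gamma$ (from another pair in $K$) must lie in at least two of the replacement trees, so as to cover both the $(\gamma,\alpha)$ and the $(\gamma,\beta)$ cross-strip adjacencies. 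Those two trees must then share the branch point where $\gamma$ meets the rest of the gadget, and that vertex has degree $\ge 3$ in $H$. So the refined gadget as described cannot satisfy Property~\ref{prop:only:one:H:if:deg:greater:2} without a further idea, and the proposal stops short of providing one.
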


\begin{proof}
Let~$G$ be the graph obtained by composing the fuzzy linear interval 
strips~$(G'_i, a'_i, b'_i)$ with~$i \in\{1,\ldots,n\}$.
By Lemma~\ref{lem:reconst:end:not:in:fuz} each strip~$(G'_i, a'_i, b'_i)$ has 
a reconstruction~$(H_i,\phi_i,\isg_i)$ for which~$H_i$ is a path and for which 
the ends do not have the same image under~$\phi_i$ as any other vertex 
of~$H_i$. Additionally we can require that the ends are not part of fuzzy vertex pairs. Without loss of generality all graphs~$H_i$ are disjoint.
Define a map~$\psi$ from the set of 
ends~$\{a'_1,\ldots,a'_n,b'_1,\ldots,b'_n\}$ to~$V(G_0)$ by mapping~$a'_i$ 
to~$a_i$ and~$b'_i$ to~$b_i$.
Let~$H$ be the graph obtained by forming the disjoint union of all 
graphs~$H_i$ and then adding all edges~$\{u,v\}$, where~$u$ and~$v$ are distinct elements of~$\{a'_1,\ldots,a'_n,b'_1,\ldots,b'_n\}$ with~$\{\psi(u),\psi(v)\}\in E(G_0)$. We let
\[\isg := \bigcup_{i= 1} ^n \{T \in \isg_i \mid V(T)\text{ contains no end of } 
H_i\} \cup \{ T_C \mid C \text{ a connected component of } G_0\}\] where for a 
component~$C$ the graph~$T_C$ is defined as some subtree of~$H$ spanning all vertices
for which there exists a tree~$T$ in some~$\isg_i$ whose vertex set~$V(T)$ contains a vertex that maps to~$C$ via~$\psi$.

Let~$\phi\colon V(G) \rightarrow V(H)$ be the map given by~$\phi(v) = 
\phi_i(v)$ if~$v\in V(G_i)$.

We claim that~$(H,\phi,\isg)$ is a reconstruction of~$G$.
We first argue that 
Property~\ref{prop:if:adj:subgraph} holds. For two adjacent vertices~$v,v'$ 
for which~$\phi(v)$ and~$\phi(v')$ lie in a common graph~$H_i$, there is a 
tree~$T\in \isg_i$ that contains~$\phi(v)$ and~$\phi(v')$. This tree is also 
contained in~$\isg$.
Suppose now that the vertices~$v$ and~$v'$ are adjacent but~$\phi(v)$ and~$\phi(v')$ lie in 
different graphs~$H_i$ and~$H_j$.
This implies that there are ends~$w$ and~$w'$ mapping to the same 
component~$C$ of~$G_0$ such that~$w$ is adjacent to~$v$ and~$w'$ is adjacent 
to~$v'$. Thus~$T_C$ contains~$\phi(v)$ and~$\phi(v')$. We argue that the 
remaining properties hold: Properties~\ref{prop:fuzzy:means:pathends} 
and~\ref{prop:paired:fuzzyness} follow from the fact that for any 
component~$C$ of~$G_0$ all preimages of vertices in~$T_C$ are adjacent and 
from the fact that the properties hold for the reconstructions of the strips.
Property~\ref{prop:only:one:H:if:deg:greater:2} follows since the graphs~$H_i$ 
in the reconstructions of the strips are paths.
\end{proof}

\begin{lemma}\label{lem:char:one:direction}
Every quasi-line graph has a reconstruction.
\end{lemma}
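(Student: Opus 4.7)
The plan is to combine the Chudnovsky--Seymour structure theorem with the two reconstruction lemmas already proven. Let $G$ be a quasi-line graph. First I would reduce to the connected case: since the conditions \ref{prop:if:adj:subgraph}--\ref{prop:only:one:H:if:deg:greater:2} are defined vertex-pair-wise and tree-wise, a disjoint union of reconstructions of the components of $G$ gives a reconstruction of $G$. Concretely, if $G=G^{1}\dunion\cdots\dunion G^{k}$ and each $G^{j}$ has a reconstruction $(H^{j},\isg^{j},\phi^{j})$, then taking $H$ to be the disjoint union of the $H^{j}$, $\isg:=\bigcup_j\isg^{j}$, and $\phi$ to be the union of the $\phi^{j}$ yields a valid reconstruction, because a fuzzy pair, an adjacency, and a high-degree vertex of $H$ each live entirely inside one component.

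So I may assume $G$ is connected. By the structure theorem of Chudnovsky and Seymour recalled above, $G$ is either a fuzzy circular interval graph or a composition of fuzzy linear interval strips. In the first case, Lemma~\ref{lem:fuzzy:circ:has:resonstr} immediately supplies a reconstruction. In the second case, Lemma~\ref{lem:fuzzy:lin:has:reconstr} does the same. In either case $G$ has a reconstruction, which completes the argument.

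I do not expect any serious obstacle here: all of the real work has already been done in the preceding three lemmas, and the present lemma is essentially a bookkeeping statement that packages the two cases of the Chudnovsky--Seymour dichotomy together and passes from connected graphs to arbitrary graphs via disjoint union. The only thing one has to be a little careful about is checking that all four properties of a reconstruction are preserved under disjoint union, and in particular that Property~\ref{prop:paired:fuzzyness} (equivalently, Property $3'$) holds across components; but this is immediate since any two vertices $v_1,v_2$ with $\phi(v_1)=\phi(v_2)$ necessarily lie in the same component, so the statement reduces to the component-level property already supplied by the relevant lemma.
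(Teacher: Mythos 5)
Your proof is correct and follows exactly the same route as the paper: reduce to the connected case via disjoint unions, invoke the Chudnovsky--Seymour dichotomy, and apply Lemma~\ref{lem:fuzzy:circ:has:resonstr} or Lemma~\ref{lem:fuzzy:lin:has:reconstr} accordingly. The only difference is that you spell out the disjoint-union verification that the paper states in one sentence, which is fine.
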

\begin{proof}
Since we can obtain a reconstruction for a disconnected graph from
reconstructions of its components, it suffices to show the lemma for connected graphs. 
Thus, suppose~$G$ is a connected quasi-line graph. 
By the structure 
theorem~\cite{DBLP:conf/bcc/ChudnovskyS05} the graph~$G$ is a fuzzy circular interval 
graph, in which case~$G$ has a reconstruction by 
Lemma~\ref{lem:fuzzy:circ:has:resonstr}, or a composition of fuzzy linear 
interval strips, in which case~$G$ has reconstruction by 
Lemma~\ref{lem:fuzzy:lin:has:reconstr}.
\end{proof}

To prove the converse of Theorem~\ref{thm:quasi:line:char} we first show 
that every graph that has a reconstruction also has a reconstruction that 
satisfies additional properties.
We say a reconstruction is \emph{pleasant} if the following five additional 
properties hold:

\begin{enumerate}
\setcounter{enumi}{4}
\item If~$\{v,v'\} \subseteq V(G)$ is a fuzzy vertex pair, then there is 
a vertex~$v'' \in V(G)$ adjacent to~$v$ such that~$\phi(v'')= \phi(v')$. 
\label{prop:fuzzy}
\item For every vertex~$v$ of~$G$ the vertex~$\phi(v)$ has degree at most~$2$
in~$H$. \label{prop:degree}
\item Each tree~$T\in \isg$ has at least two vertices and for every leaf~$h$
of~$T$ there is a vertex~$v\in V(G)$ with~$\phi(v) =
h$.\label{prop:trees:with:useful:leaves}
\item $E(H) = \bigcup_{T\in \isg} E(T)$. \label{prop:all:edges}
\item There are no two distinct trees~$T_1,T_2 \in \isg$ such that~$V(T_1)\subseteq V(T_2)$.\label{prop:no:inclusion}
\end{enumerate}

Figure~\ref{figure1} shows a reconstruction~$(H,\phi,\isg)$ which, due to violations of Properties~\ref{prop:fuzzy} and~\ref{prop:degree}, is not pleasant. It also shows a pleasant reconstruction~$(H',\phi',\isg')$ derived from this. In fact, every reconstruction of a graph~$G$ can be altered into a pleasant reconstruction of~$G$.

\begin{lemma}\label{lem:has:pleasant:reconst}
A graph that has a reconstruction also has a pleasant reconstruction.
\end{lemma}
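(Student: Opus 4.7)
The strategy is to start with an arbitrary reconstruction $(H,\phi,\isg)$ of $G$, which exists by Lemma~\ref{lem:char:one:direction}, and apply a sequence of local modifications, each aimed at establishing one of the pleasantness properties while preserving the four core Properties~\ref{prop:if:adj:subgraph}--\ref{prop:only:one:H:if:deg:greater:2} and any pleasantness already achieved. The operations I would use are: edge subdivision on trees (as in the proof of Lemma~\ref{lem:reconst:end:not:in:fuz}), vertex splitting in $H$, pruning of preimage-free leaves from trees, and deletion of superfluous trees or edges of~$H$.

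I would enforce the five extra properties in the order \ref{prop:fuzzy}, \ref{prop:degree}, \ref{prop:trees:with:useful:leaves}, \ref{prop:all:edges}, \ref{prop:no:inclusion}. For Property~\ref{prop:fuzzy}: for each fuzzy pair $\{v,v'\}$ for which no neighbor of $v$ in $G$ is mapped to $\phi(v')$, subdivide the edge of the witnessing path $T$ incident with $\phi(v')$, redirect the appropriate preimages to the newly inserted vertex, and split $T$ into two subpaths, exactly as in the proof of Lemma~\ref{lem:reconst:end:not:in:fuz}. For Property~\ref{prop:degree}: any image vertex $h=\phi(v)$ of degree at least $3$ in $H$ lies, by Property~\ref{prop:only:one:H:if:deg:greater:2}, in at most one tree $T$; I would split $h$ into two vertices joined by a new edge lying in $T$ and distribute the incident $H$-edges and the preimages of $h$ between the copies so that each has degree at most~$2$. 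The potential $\sum_{v\in V(G)} \max(0,\deg_H(\phi(v))-2)$ then strictly decreases. Property~\ref{prop:trees:with:useful:leaves} is then secured by iteratively replacing each tree $T$ with a preimage-free leaf $h$ by $T-h$, deleting any tree that shrinks to a preimage-free singleton, and handling the remaining singletons (trees on a single vertex with a preimage) by merging them into an adjacent tree or by adding an auxiliary edge to a neighboring image vertex. Property~\ref{prop:all:edges} is imposed by deleting every edge of $H$ not in any tree, and Property~\ref{prop:no:inclusion} by deleting every tree whose vertex set is contained in the vertex set of another. Both deletions are harmless: every adjacency, every fuzzy pair and every Property~\ref{prop:paired:fuzzyness} constraint witnessed by the deleted object is still witnessed by the remaining structure.

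The main obstacle is verifying that operations performed for a later property do not undo earlier ones. Splitting a high-degree image vertex may introduce a new leaf of a tree without a preimage, and subdivisions can alter the set of fuzzy pairs; conversely, the merging step needed to eliminate singleton trees may temporarily raise the degree of an image vertex. I would address this by iterating the whole procedure until a suitable lexicographic potential stabilises, with termination guaranteed by the fact that each local operation strictly reduces an explicit measure (e.g.\ first the number of violations of Property~\ref{prop:fuzzy}, then the total excess $H$-degree at images, then the number of preimage-free leaves, and finally the number of tree inclusions together with non-tree edges). Once the process halts, the resulting triple is a pleasant reconstruction of~$G$.
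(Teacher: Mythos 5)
Your overall plan coincides with the paper's: enforce Properties~\ref{prop:fuzzy}--\ref{prop:no:inclusion} in that order by local modifications, each preserving Properties~\ref{prop:if:adj:subgraph}--\ref{prop:only:one:H:if:deg:greater:2}. The gap is in your treatment of Property~\ref{prop:fuzzy}, which is exactly the point where the argument must deviate from Lemma~\ref{lem:reconst:end:not:in:fuz} rather than repeat it. In Lemma~\ref{lem:reconst:end:not:in:fuz} the moved preimages are $\phi^{-1}(\phi(v))\cap N(a)$, and the argument works because the end $a$ is the \emph{unique} preimage of $\phi(a)$. For a general fuzzy pair $\{v,v'\}$ violating Property~\ref{prop:fuzzy} (no neighbor of $v$ maps to $\phi(v')$), neither ingredient survives: the literal analogue of the moved set, $\phi^{-1}(\phi(v'))\cap N(v)$, is empty by hypothesis, and $\phi(v)$ may have several preimages. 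Your operation (subdivide the edge of $T$ at $\phi(v')$, move some preimages of $\phi(v')$, split $T$ into $T-\phi(v')$ and $T-\phi(v)$) then destroys Property~\ref{prop:if:adj:subgraph}. Concretely, let $G$ be the paw: $V(G)=\{1,2,4,5\}$ with edges $\{1,2\},\{2,4\},\{2,5\},\{4,5\}$; let $H=T$ be the path $\phi(1)\,m\,\phi(4)$ with $\phi(2)=\phi(1)$, $\phi(5)=\phi(4)$, and $m$ without preimage. The pair $\{1,4\}$ is fuzzy and violates Property~\ref{prop:fuzzy}. After your operation no tree contains both $\phi(1)$ and $\phi(4)$, yet $2$ maps to $\phi(1)$ and is adjacent to $4$, so the edge $\{2,4\}$ loses its witnessing tree --- whether you move nothing or move $5$ to the new vertex. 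The paper subdivides on the \emph{other} side: it inserts $h'$ on the edge of $T$ at $\phi(v)$ (and into every other tree through $\phi(v)$), moves \emph{all} of $\phi^{-1}(\phi(v))\setminus\{v\}$ to $h'$, and only then splits $T$ into $T-\phi(v)$ and $T-\phi(v')$. This is sound precisely because the violation hypothesis says $v$ has no edge into $\phi^{-1}(\phi(v'))$, so $\phi(v)$ may be disconnected from $\phi(v')$ without killing any edge of $G$, while the other preimages of $\phi(v)$ keep all their adjacencies through $h'$.

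A second, lesser problem is termination. You propose to iterate the whole procedure and appeal to a lexicographic potential, but you also (correctly) observe that your fix for Property~\ref{prop:trees:with:useful:leaves} --- merging a singleton tree into an adjacent tree, or attaching it by an auxiliary edge to a neighboring image vertex --- can raise the degree of an image vertex. That increases an earlier component of your potential while decreasing a later one, so lexicographic descent fails and termination is not established; moreover, merging a singleton tree into an adjacent tree can create new violations of Property~\ref{prop:fuzzy:means:pathends}, since vertices whose images previously shared no tree suddenly do. The paper needs no outer iteration: its fix for Property~\ref{prop:degree} attaches a pendant vertex carrying all preimages of the offending vertex (no redistribution of incident edges, hence no choices to get wrong), its fix for single-vertex trees again uses a pendant vertex carrying one preimage, and each step is checked to preserve the properties already established, with the deletions for Properties~\ref{prop:all:edges} and~\ref{prop:no:inclusion} performed last. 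If you replace your operations for Properties~\ref{prop:fuzzy} and~\ref{prop:trees:with:useful:leaves} by these pendant-vertex versions, a single pass suffices and the potential bookkeeping becomes unnecessary.
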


\begin{proof}

(Property~\ref{prop:fuzzy}.) Suppose~$\{v,v'\}\subseteq V(G)$ is a fuzzy 
vertex pair but there is no 
vertex~$v''\in V(G)$ adjacent to~$v$ for which~$\phi(v'')= \phi(v')$. We now 
construct a different reconstruction of~$G$ that has at least one fuzzy 
vertex pair less than the original reconstruction. For this let~$T\in \isg$ be 
such 
that~$\phi(v),\phi(v')\in V(T)$. The properties of a reconstruction imply 
that~$T$ is a path with endpoints~$\phi(v)$ and~$\phi(v')$. By 
Property~$\ref{prop:three:prime}'$
,~$T$ is the only tree in~$\isg$ that 
contains both~$\phi(v)$ and~$\phi(v')$. Let~$h\in V(T)$ 
be the neighbor of~$\phi(v)$ in~$T$. We subdivide the 
edge~$\{\phi(v),h\}$, adding a new vertex~$h'$, in~$H$ as well as in every 
tree~$T'\in \isg$ that contains the 
edge~$\{\phi(v),h\}$. We then alter~$\phi$ by setting~$\phi(w) = h'$ for all 
vertices in~$w\in \phi^{-1}(\phi(v))\setminus \{v\}$. We replace~$T$ by two 
new trees~$T_1 := T - \phi(v)$ and~$T_2 := T - \phi(v')$.
If a tree~$T'\in \isg \setminus \{T\}$ originally contained~$\phi(v)$ but 
not~$h$ we add~$h'$ and the edge~$\{\phi(v),h'\}$ to~$T'$.
This gives a new reconstruction of~$G$ that has one fuzzy pair less. 
By repeating the modification we eventually obtain a reconstruction that
satisfies Property~\ref{prop:fuzzy}. 

(Property~\ref{prop:degree}.) We now argue that we can avoid that~$\phi$ maps 
vertices of~$G$ to vertices 
of~$H$ of degree larger than~$2$.
Suppose~$v$ is mapped to a vertex of degree larger than~$2$ in~$H$. By 
definition, there is at most one tree~$T\in \isg$ that contains~$\phi(v)$.
We add a new auxiliary vertex~$h$ and the edge~$\{h,\phi(v)\}$ to~$H$ and 
to~$T$. For every vertex~$w \in \phi^{-1}(\phi(v))$, we then 
redefine~$\phi$ by setting~$\phi(w) := h$.
By repeating this operation we obtain 
a reconstruction that does not map any vertices of~$G$ to vertices of~$H$ of 
degree larger than~$3$.
Property~\ref{prop:fuzzy} is maintained by the modification.

(Property~\ref{prop:trees:with:useful:leaves}). We first argue that we can 
find a reconstruction in which for every~$T\in \isg$ there 
are vertices~$v$ and~$v'$ such that~$\phi(v),\phi(v')\in 
V(T)$ and~$\phi(v)\neq\phi(v')$. Let~$T\in \isg$ be a tree that does not have 
this property. If there is at most one vertex~$v$ with~$\phi(v)\in V(T)$, then 
we simply delete~$T$ from~$\isg$. Suppose otherwise. There is a vertex~$h$ 
of~$T$ and two vertices~$v,v'$ of~$G$, such that~$\phi(v) = \phi(v') = h$. If 
there is another tree~$T'\in \isg$ with~$h\in V(T')$ we can delete~$T$ and 
still have a valid reconstruction. Otherwise we add a new auxiliary 
vertex~$h'$ to~$H$ and add the edge~$\{h',h\}$.
We then redefine~$T$ to be the tree induced by~$h'$ and~$h$ and 
redefine~$\phi(v)$ to be~$h'$. This gives us a valid reconstruction and the 
new~$T$ has two vertices which are images of vertices of~$G$ under~$\phi$.

To achieve that all leaves of the trees are images of vertices of~$G$, from
every tree~$T\in \isg$ 
we repeatedly delete leaf vertices~$h\in V(T)$ for which no vertex~$v\in V(G)$
exists with~$\phi(v) = h$. This yields a reconstruction that satisfies
Property~\ref{prop:trees:with:useful:leaves}. 
Note that these modifications of the reconstruction maintain
Properties~\ref{prop:fuzzy} and~\ref{prop:degree}.

(Properties~\ref{prop:all:edges} and~\ref{prop:no:inclusion}.) By deleting all the edges of~$H$ that are not in~$\bigcup_{T\in \isg} E(T)$ and repeatedly removing trees~$T\in \isg$ whose vertex set is contained in some other tree~$T'\in \isg $ we obtain a reconstruction that satisfies Properties~\ref{prop:all:edges} and \ref{prop:no:inclusion} while maintaining Properties~\ref{prop:fuzzy}--\ref{prop:trees:with:useful:leaves}.
\end{proof}

Let~$(H,\phi,\isg)$ be a reconstruction. For every vertex~$h\in V(H)$ and 
every edge~$e \in E(H)$ incident with~$h$, let~${\mathcal{P}}_{h,e}$ be the 
set of paths~$P$ in~$H$ that start at~$h$, continue with~$e$ as first edge, and for which there 
exists a tree~$T\in \isg$ that contains~$P$ as a subgraph.

\begin{lemma}\label{lemma:unique:Hew}
Let~$(H,\phi,\isg)$ be a pleasant reconstruction. Let~$h\in V(H)$ be a vertex 
and~$e \in E(H)$ an edge incident with~$h$.

\begin{enumerate}
\item There is a unique tree~$T_{h,e} \in \isg$
that contains all paths in~${\mathcal{P}}_{h,e}$ as subgraphs.
\item If~$h'$ is a leaf of~$T_{h,e}$ and~$e'$ is the edge of~$T_{h,e}$ 
incident with~$h'$, then~$T_{h,e} = T_{h',e'}$.
\end{enumerate}
\end{lemma}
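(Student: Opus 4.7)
My plan is to exploit three of the pleasantness properties: Property~\ref{prop:only:one:H:if:deg:greater:2} (at most one tree at an $H$-vertex of degree~$\ge 3$), Property~\ref{prop:all:edges} (every edge of $H$ lies in some tree), and Property~\ref{prop:no:inclusion} (no two distinct trees with nested vertex sets). In particular, Property~\ref{prop:all:edges} already ensures that ${\mathcal P}_{h,e}$ is non-empty, since $e$ lies in some $T\in\isg$ and hence the single-edge path starting with $e$ belongs to ${\mathcal P}_{h,e}$.

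For the existence part of~(1), I would pick a longest path $P^{*}=(h,v_1,\ldots,x_n)\in{\mathcal P}_{h,e}$ and any tree $T^{*}\in\isg$ containing it. To show that $T^{*}$ contains every other $P\in{\mathcal P}_{h,e}$, I compare $P$ with $P^{*}$ vertex by vertex. If they never disagree, $P$ is an initial segment of $P^{*}$ and we are done. Otherwise let $x_j$ be the first vertex where they disagree; since both start with edge $e$, we have $j\ge 1$, and then the predecessor of $x_j$ on $P^{*}$, its successor on $P^{*}$, and its successor on $P$ are three pairwise distinct $H$-neighbors of $x_j$, so $x_j$ has $H$-degree~$\ge 3$. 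As both $T^{*}$ and the tree containing $P$ meet $x_j$, Property~\ref{prop:only:one:H:if:deg:greater:2} forces them to coincide and $P\subseteq T^{*}$ follows.

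For uniqueness, suppose $T_1,T_2\in\isg$ both contain every path in ${\mathcal P}_{h,e}$; both then contain $P^{*}$. If any vertex of $P^{*}$ has $H$-degree~$\ge 3$, Property~\ref{prop:only:one:H:if:deg:greater:2} immediately gives $T_1=T_2$. Otherwise maximality of $P^{*}$ forbids either tree from extending past its far endpoint $x_n$, so any discrepancy must lie on the side of $h$ opposite $e$. There, every shared vertex along the extension has $H$-degree~$\le 2$ (otherwise Property~\ref{prop:only:one:H:if:deg:greater:2} again finishes the argument), so the two extensions proceed along the same walk in $H$ from $h$, which makes one vertex set contained in the other; Property~\ref{prop:no:inclusion} then rules this out unless $T_1=T_2$.

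Part~(2) should be a short consequence of~(1): $T_{h,e}$ is a tree in $\isg$ whose leaf $h'$ is incident only with $e'$, so for every vertex $x\in V(T_{h,e})$ the unique $T_{h,e}$-path from $h'$ to $x$ begins with $e'$ and therefore lies in ${\mathcal P}_{h',e'}$. By part~(1) applied to $(h',e')$, all these paths must lie in $T_{h',e'}$, giving $V(T_{h,e})\subseteq V(T_{h',e'})$; Property~\ref{prop:no:inclusion} then yields $T_{h,e}=T_{h',e'}$. The main obstacle I anticipate is the uniqueness step in~(1) in the degenerate case where $P^{*}$ avoids all high-degree vertices of $H$: Property~\ref{prop:only:one:H:if:deg:greater:2} no longer suffices there, and one must invoke Property~\ref{prop:no:inclusion} carefully to exclude all distinct extensions of $P^{*}$ past $h$.
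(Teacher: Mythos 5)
Your proposal is correct and follows essentially the same route as the paper's proof: two diverging paths in ${\mathcal{P}}_{h,e}$ share a vertex of $H$-degree at least~$3$, so Property~\ref{prop:only:one:H:if:deg:greater:2} pins down a single tree, while in the degenerate case where no such vertex is available Property~\ref{prop:no:inclusion} excludes two distinct trees with nested vertex sets; part~(2) likewise follows by covering $T_{h,e}$ with paths starting at the leaf~$h'$ and applying part~(1). The differences are only organizational: you work with a single longest path instead of the paper's case split on the number of maximal paths, and in part~(2) you conclude via Property~\ref{prop:no:inclusion} where the paper invokes the uniqueness assertion of part~(1).
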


\begin{proof}
By Property~\ref{prop:all:edges} of pleasant reconstructions, at least one 
tree in~$\isg$ must contain~$e$. 
It suffices to show the existence of a unique tree that contains all maximal 
paths in~${\mathcal{P}}_{h,e}$. 
We first assume that there are at least two distinct maximal paths 
in~${\mathcal{P}}_{h,e}$. Note that any two distinct maximal paths 
in~${\mathcal{P}}_{h,e}$ share a vertex that has degree at least 3 in~$H$. 
Thus, by Property~\ref{prop:only:one:H:if:deg:greater:2}, each path in~${\mathcal{P}}_{h,e}$ is contained in exactly one tree in~$\isg$ and this tree is the same tree for all paths in~${\mathcal{P}}_{h,e}$.

Now suppose there is only one maximal path in~${\mathcal{P}}_{h,e}$. Then, by 
definition of~${\mathcal{P}}_{h,e}$, there is a tree that contains this 
maximal path and therefore contains all paths in~${\mathcal{P}}_{h,e}$. 
If~$T,T'\in \isg$ were two distinct trees with this property, then due to Property~\ref{prop:no:inclusion} the trees~$T$ 
and~$T'$ would share a vertex of degree~$3$. However, this is not the case, by 
Property~\ref{prop:only:one:H:if:deg:greater:2}.
This shows uniqueness and thus the first part of the lemma.

To prove the second part of the lemma, it suffices to show that all paths 
in~${\mathcal{P}}_{h,e}$ are contained in~$T_{h',e'}$.
Suppose~$P \in {\mathcal{P}}_{h,e}$. Since~$T_{h,e}$ is a tree, there are two 
(possibly equal) paths~$P_1$ and~$P_2$ in~$T_{h,e}$ starting at vertex~$h'$ 
with first edge~$e'$ that together cover~$P$. By definition both paths are 
contained in~$T_{h',e'}$ and thus~$P$ is a subgraph of~$T_{h',e'}$.
\end{proof}

\begin{proof}[Proof of Theorem~\ref{thm:quasi:line:char}]
One direction of the theorem has already been established by 
Lemma~\ref{lem:char:one:direction}. 

For the converse let~$G$ be a graph that has a reconstruction.
By Lemma~\ref{lem:has:pleasant:reconst} the graph~$G$ has a pleasant 
reconstruction~$(H,\phi,\isg)$. 
Let~$v$ be a vertex of~$G$. We show that all edges incident with~$v$ can be covered 
by two cliques.

Since the reconstruction is pleasant,~$\phi(v)$ has degree at most~$2$.
If~$\phi(v)$ has degree~$0$, then~$v$ is an isolated vertex or the 
set~$\phi^{-1}(\phi(v))$ is a clique that contains all neighbors of~$v$.
Suppose now~$\phi(v)$ has degree at least 1. If~$v$ is contained in a fuzzy 
pair, let~$T \in \isg$ be a path such that there is a vertex~$u$ non-adjacent 
to~$v$ for which~$\phi(v)$ and~$\phi(w)$ form the endpoints of~$T$.
Otherwise let~$e$ be an edge incident with~$v$ and let~$T$ be the union of all 
paths in~${\mathcal{P}}_{h,e}$.
We define for~$v$ two cliques~$S_{v}^{1}$ and~$S_{v}^{2}$ in the following way:

We define~$S_{v}^{2} := \{v\} \cup \big(\phi^{-1}(V(T)\setminus 
\{\phi(v)\})\cap N(v)\big)$, and we define~$S_{v}^{1} := \{v\} \cup (N(v) 
\setminus S_{v}^{2})$. By definition, the sets~$S_{v}^{1}$ and~$S_{v}^{2}$ 
together cover all edges incident with~$v$ and it thus suffices to show that they 
are cliques.

The set~$S_{w}^{2}$ forms a clique by Property~\ref{prop:fuzzy:means:pathends} 
of reconstructions. 
To see that the set~$S_{w}^{1}$ is also a clique, it suffices to show that 
every two distinct vertices~$w_1,w_2 \in S_{v}^{1} \setminus \{v\}$ are 
adjacent. If~$\phi(w_1) = \phi(v)$ and~$w_2$ were not adjacent to~$w_1$, 
then~$\{w_1,w_2\}$ would be a fuzzy vertex pair. Then 
Property~$\ref{prop:three:prime}'$ 
would imply~$\phi(w_2) = \phi(u) \subseteq V(T)\setminus \{\phi(w)\}$, which 
implies~$w_2 \notin S_{v}^{1}$, giving a contradiction. Symmetrically, it 
follows from~$\phi(w_2) = \phi(v)$ that~$w_1$ and~$w_2$ are adjacent. Finally 
suppose~$\phi(w_1)$ and~$\phi(w_2)$ are different from~$\phi(w)$. Then there 
are trees~$T_1 , T_2\in \isg\setminus \{T\}$ such that for~$i\in\{1,2\}$, we 
have~$\{\phi(v),\phi(w_i)\} \subseteq V(T_i)$. Since~$\{u,v\}$ is a fuzzy 
pair, for~$i\in\{1,2\}$ we also have~$V(T) \nsubseteq V(T_i)$.
Since~$V(T_1)$ and~$V(T_2)$ cannot share a vertex of degree at least~$3$ 
in~$H$ (Property~\ref{prop:only:one:H:if:deg:greater:2}), 
either~$\phi(w_1),\phi(w_2) \in V(T_1)$ or~$\phi(w_1),\phi(w_2) \in V(T_2)$. 
Without loss of generality say~$\phi(w_1),\phi(w_2) \in V(T_1)$. The pair~$\{w_1,w_2\}$ cannot 
be fuzzy, since the graph~$T_1$ is either not a path or has an endpoint 
in~$V(T)$. Therefore,~$w_1$ and~$w_2$ are adjacent. This shows 
that~$S_{w}^{1}$ is a clique.
\end{proof}

\begin{corollary}
A graph has a pleasant fuzzy reconstruction if and only if it is a quasi-line graph.
\end{corollary}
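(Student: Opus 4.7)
The plan is to observe that this corollary is essentially a direct combination of two results already established in the section, namely Theorem~\ref{thm:quasi:line:char} and Lemma~\ref{lem:has:pleasant:reconst}, together with the trivial observation that a pleasant reconstruction is in particular a reconstruction.

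For the forward direction, I would argue that if~$G$ has a pleasant reconstruction~$(H,\phi,\isg)$, then since the definition of pleasant merely adds Properties~\ref{prop:fuzzy}--\ref{prop:no:inclusion} to Properties~\ref{prop:if:adj:subgraph}--\ref{prop:only:one:H:if:deg:greater:2}, the triple~$(H,\phi,\isg)$ is also a fuzzy reconstruction in the sense of Theorem~\ref{thm:quasi:line:char}. Hence Theorem~\ref{thm:quasi:line:char} immediately yields that~$G$ is a quasi-line graph.

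For the backward direction, suppose~$G$ is a quasi-line graph. By Theorem~\ref{thm:quasi:line:char}, there exists a fuzzy reconstruction~$(H,\phi,\isg)$ of~$G$. Lemma~\ref{lem:has:pleasant:reconst} then guarantees that~$G$ also admits a pleasant reconstruction, which is precisely what is required. Since both directions reduce to invoking earlier results, there is no genuine obstacle; the statement simply records the observation that, in Theorem~\ref{thm:quasi:line:char}, one may without loss of generality take the reconstruction to be pleasant.
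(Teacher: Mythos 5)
Your proof is correct and matches the paper's intended argument exactly: the corollary is stated without proof precisely because it follows immediately from Theorem~\ref{thm:quasi:line:char} (a pleasant reconstruction is in particular a reconstruction) together with Lemma~\ref{lem:has:pleasant:reconst} (any reconstruction can be upgraded to a pleasant one). Nothing is missing.
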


Concerning reconstructions, we remark that as yet another version of 
Property~\ref{prop:paired:fuzzyness} we could require that no two distinct 
paths in~$\isg$ share an endpoint. This alternative is analogous to the fact that in fuzzy circular interval graphs and 
fuzzy 
linear interval strips the intervals may not share endpoints. However, the 
alternative is not compatible with the properties of pleasant reconstructions. 
Along these lines, there are various properties one might want to add to a 
pleasant reconstruction. For example one can require that the trees in~$\isg$ 
contain at most one vertex of degree~$3$. For the definition of pleasant 
reconstruction we have chosen properties that streamline and simplify our 
proof of Conjecture~\ref{conj:opsut}, which follows next.
\section{Competition numbers and quasi-line graphs} \label{sec:quasi-line}
In this section we show that quasi-line graphs have competition number at 
most~$2$.

We use the following characterization of graphs of competition number at 
most~$2$. The original general characterization for arbitrary competition 
numbers, given by Lundgren and Maybee, contained a slight error which was 
fixed by Kim (see~\cite{KimVadis}). However, for graphs of competition number at 
most~$2$ the original characterization is correct.

\begin{theorem}[Lundgren, Maybee~\cite{MR712931}]\label{thm:char:lund:may}
A graph on~$n$ vertices
has competition number at most~$2$ if and only if there exists a 
clique edge covering~$C_1,\ldots,C_n$ and an ordering of the 
vertices~$v_1,\ldots,v_n$ such that~$v_i \notin C_j$ for~$i\geq j+2$.
\end{theorem}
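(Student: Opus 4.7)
The plan is to prove both directions of the characterization by explicit constructions linking DAGs and clique sequences. Throughout, for a vertex $w$ of a directed graph $D$, write $N^-_D(w)$ for its set of in-neighbors. For sufficiency ($\Leftarrow$), given a clique edge covering $C_1,\ldots,C_n$ and an ordering $v_1,\ldots,v_n$ of $V(G)$ with $v_i\notin C_j$ whenever $i\ge j+2$, I construct a DAG $D$ on $V(G)\cup\{a,b\}$ (with $a,b$ two new vertices) whose competition graph is $G$ together with $a,b$ as isolated vertices. Topologically order $V(D)$ as $v_1,\ldots,v_n,a,b$. For each $k\in\{1,\ldots,n-2\}$, add an arc from every vertex of $C_k$ to $v_{k+2}$; send $C_{n-1}$ analogously to $a$ and $C_n$ to $b$. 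The hypothesis $C_k\subseteq\{v_1,\ldots,v_{k+1}\}$ guarantees every arc goes from an earlier to a later vertex in the topological order, so $D$ is acyclic. Moreover $N^-_D(v_{k+2})=C_k$ for $k=1,\ldots,n-2$, and $N^-_D(a)=C_{n-1}$, $N^-_D(b)=C_n$, so the nontrivial cliques appearing as in-neighborhoods in $D$ are precisely $C_1,\ldots,C_n$; hence the competition graph on $V(G)$ has exactly the edges of $G$, while $a,b$ have no outgoing arcs and remain isolated.

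Conversely, for necessity ($\Rightarrow$), if $\cp(G)\le 2$, then after padding with at most two isolated vertices there is a DAG $D$ on $V(G)\cup\{a,b\}$ whose competition graph is $G$ together with isolated vertices $a,b$. Fix a topological ordering $w_1,\ldots,w_{n+2}$ of $V(D)$ and set $C_j:=N^-_D(w_{j+2})$ for $j=1,\ldots,n$; each $C_j$ is a clique in the competition graph. Because $|N^-_D(w_1)|=0$ and $|N^-_D(w_2)|\le 1$, those two in-neighborhoods cover no edges, so $C_1,\ldots,C_n$ still cover every edge of $G$. Let $v_1,\ldots,v_n$ be $V(G)$ ordered by the restriction of the topological order. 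If $p_i$ denotes the position of $v_i$ among $w_1,\ldots,w_{n+2}$, then $p_i\ge i$ (at most $i-1$ vertices of $V(G)$ precede $v_i$), so $i\ge j+2$ forces $p_i\ge j+2>j+1$ and hence $v_i\notin\{w_1,\ldots,w_{j+1}\}\supseteq C_j$.

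The only mildly delicate point is that some $C_j$ in the necessity direction might contain $a$ or $b$. Isolation of $a$ in the competition graph forces any other element $u\in C_j$ to satisfy $\{u,a\}\in E$ (since both would share the out-neighbor $w_{j+2}$), which is impossible; hence such a $C_j$ equals $\{a\}$ or $\{b\}$, covers no edges, and can be replaced by the empty clique. This ensures $C_j\subseteq V(G)$ while preserving both the edge cover and the ordering property.
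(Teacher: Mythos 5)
Your proof is correct, but there is nothing in the paper to compare it against: the paper does not prove this statement. It is imported verbatim as a result of Lundgren and Maybee (with the remark that Kim repaired an error in the general-$k$ characterization, the case of competition number at most~$2$ being unaffected), and the paper's own work begins only with Theorem~\ref{thm:clique:sequence}, which is proved by reformulating the present statement, not by going back to directed acyclic graphs. So your proposal supplies a self-contained proof of the cited result, and it is the natural one. Both directions check out. For sufficiency, the condition $v_i\notin C_j$ for $i\ge j+2$ is exactly $C_j\subseteq\{v_1,\ldots,v_{j+1}\}$, so your arcs from $C_k$ to $v_{k+2}$ (and from $C_{n-1}$ to $a$, $C_n$ to $b$) all go forward in the order $v_1,\ldots,v_n,a,b$, giving acyclicity; the in-neighborhoods of $v_1,v_2$ are empty and $a,b$ have no outgoing arcs, so the competition graph is precisely $G$ plus two isolated vertices. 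For necessity, the padding to exactly two isolated vertices is legitimate (adding an arcless vertex to a DAG adds an isolated vertex to its competition graph), the observation that $N^-_D(w_1)$ and $N^-_D(w_2)$ cover no edges justifies discarding them, the position bound $p_i\ge i$ gives the ordering property, and your cleanup of in-neighborhoods equal to $\{a\}$ or $\{b\}$ handles the only way a non-clique of $G$ could arise. One cosmetic remark: if one insists that the cliques in the covering be nonempty, replace any discarded $C_j$ by $\{v_1\}$, which still satisfies $C_j\subseteq\{v_1,\ldots,v_{j+1}\}$.
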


We call the set of edges incident with a vertex~$v$ the \emph{vertex star of~$v$}.
For a graph~$G$, we say a sequence of cliques~$C_1,\ldots,C_t$ 
\emph{incrementally 
covers} a subset of vertices~$V'\subseteq V(G)$ if~$|V'| \geq t$ and the 
following holds: The cliques 
cover all edges incident with~$V'$ and for all~$i\in \{1,\ldots,t-1\}$ the 
cliques~$C_1,\ldots,C_{i+1}$ cover vertex stars of at least~$i$ distinct 
vertices 
in~$V'$. 

\begin{theorem}\label{thm:clique:sequence}
A graph~$G$ has competition number at most~$2$ if and only if there exists a 
sequence of cliques that incrementally covers~$V(G)$.
\end{theorem}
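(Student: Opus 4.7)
The plan is to derive Theorem~\ref{thm:clique:sequence} from Theorem~\ref{thm:char:lund:may} by observing that the incremental cover condition is essentially the Lundgren--Maybee condition read from last to first. I would establish an explicit correspondence via index reversal, together with a small pruning step in the backward direction.

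For the forward direction, suppose $G$ has competition number at most~$2$. Applying Theorem~\ref{thm:char:lund:may}, I obtain a clique edge cover $C_1, \ldots, C_n$ and an ordering $v_1, \ldots, v_n$ with $v_i \notin C_j$ for $i \geq j+2$. I would set $D_i := C_{n-i+1}$ and $w_i := v_{n-i+1}$. The Lundgren--Maybee condition is equivalent to the statement that $v_i \in C_j$ implies $j \geq i-1$, so the star of $v_i$ is covered by $C_{i-1}, \ldots, C_n$, which upon reversal says that the star of $w_i$ is covered by $D_1, \ldots, D_{i+1}$. Therefore, for each $i \in \{1, \ldots, n-1\}$ the cliques $D_1, \ldots, D_{i+1}$ cover the stars of the $i$ distinct vertices $w_1, \ldots, w_i$, and $D_1, \ldots, D_n$ is the desired incremental cover of $V(G)$.

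For the backward direction, let $D_1, \ldots, D_t$ incrementally cover $V(G)$, let $S_k$ denote the set of vertices whose stars are covered by $D_1, \ldots, D_k$, and note that $S_j \subseteq S_{j+1}$. I would inductively pick $w_i \in S_{i+1} \setminus \{w_1, \ldots, w_{i-1}\}$ for $i = 1, \ldots, t-1$; this is possible because $|S_{i+1}| \geq i$ by hypothesis and each previous $w_j$ lies in $S_{j+1} \subseteq S_{i+1}$. Because all edges of $G$ are covered, $S_t = V(G)$, so the remaining vertices can be listed arbitrarily as $w_t, \ldots, w_n$. Finally I would set $v_i := w_{n-i+1}$ and $C_j := D_{n-j+1}$ for $j \in \{n-t+1, \ldots, n\}$, with $C_j := \emptyset$ for $j \leq n-t$, and verify the Lundgren--Maybee condition.

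The main obstacle is that the Lundgren--Maybee condition demands set-theoretic non-membership $v_i \notin C_j$ for $i \geq j+2$, whereas the construction only controls which cliques must cover the edges incident with $w_l$. A vertex $w_l$ might still be a member of some $D_k$ with $k \geq l+2$ without contributing any edge to it. To repair this I would prune $w_l$ from every such $D_k$ before applying the reversal. The pruned sets are still cliques, and the edge cover is preserved: for any edge $\{w_l, w_{l'}\}$ with $l \leq l'$, the star-cover property for $w_l$ yields some $D_k$ with $k \leq l+1$ containing both endpoints, and neither $w_l$ nor $w_{l'}$ is pruned from this $D_k$ since $k \leq l+1 \leq l'+1$. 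After the pruning, $v_i \in C_j$ forces $j \geq i-1$, completing the equivalence.
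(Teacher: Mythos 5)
Your proposal is correct and follows essentially the same route as the paper: both directions are handled by index reversal of the Lundgren--Maybee sequence, and your pruning step in the backward direction is exactly the paper's definition $C'_i := C_{n+1-i}\setminus\{v_1,\ldots,v_{n-i-1}\}$ seen from the other side. The only difference is that you spell out two details the paper leaves implicit, namely the greedy extraction of the vertex ordering via the sets $S_k$ and the padding with empty cliques when the incremental cover has fewer than $n$ cliques.
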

\begin{proof}
Suppose~$G$ is a graph on~$n$ vertices. We show that the theorem is equivalent 
to Theorem~\ref{thm:char:lund:may}. That is, we show that there exists a 
clique edge covering~$C_1,\ldots,C_n$ and an ordering of the 
vertices~$v_1,\ldots,v_n$ such that~$v_i \notin C_j$ for~$i\geq j+2$ if and 
only if there exists a 
sequence of cliques that incrementally covers~$V(G)$.

For the one direction suppose~$C_1,\ldots,C_n$ is a clique edge 
covering and~$v_1,\ldots,v_n$ an ordering of the vertices such that~$v_i 
\notin C_j$ for~$i \geq j+2$. Then~$C_n,\ldots,C_1$ incrementally cover~$V(G)$.

For the converse, suppose~$C_1,\ldots, C_{n}$ is a sequence of cliques that 
incrementally covers~$V(G)$ and~$v_1,\ldots,v_n$ is an ordering of the
vertices of~$G$ such that for all~$i\in \{1,\ldots,n-1\}$ the vertex star 
of~$v_i$ is 
covered by~$C_1,\ldots, C_{i+1}$. For~$i \in \{1,\ldots,n-2\}$ let~$C'_i := 
C_{n+1-i}\setminus \{v_1,\ldots,v_{n-i-1}\}$ and let~$C'_{n-1} := C_2$ 
and~$C'_n := C_1$. Further, for~$i\in \{1,\ldots,n\}$ let~$v'_i := v_{n-i+1}$. 
The sequence of cliques~$C'_1,\ldots, C'_{n}$ is a clique edge covering, since 
for each~$C_i$ we only remove vertices whose vertex stars are already covered 
by the cliques~$C_1,\ldots,C_{i-1}$. Moreover the sequence of 
cliques~$C'_1,\ldots, C'_{n}$
has the property that~$v'_i \notin C'_j$ for~$i\geq j+2$. 
\end{proof}

\begin{lemma}\label{thm:characterization:inc:covering}
If~$G$ is a vertex minimal quasi-line graph with~$\cp(G) >2$,
then there is no sequence 
of cliques that incrementally covers a subset of the vertices of~$G$.
\end{lemma}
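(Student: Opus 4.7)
The plan is a proof by contradiction that exploits the vertex-minimality of $G$ to peel off the already-covered part. Suppose, toward a contradiction, that a sequence $C_1,\ldots,C_t$ with $t\geq 1$ incrementally covers a non-empty $V'\subseteq V(G)$. If $V'=V(G)$, then Theorem~\ref{thm:clique:sequence} immediately gives $\cp(G)\leq 2$, contradicting the hypothesis. So I may assume $V'\subsetneq V(G)$ and consider the induced subgraph $G':=G-V'$. Since the quasi-line property is inherited by induced subgraphs (the partition of $N_G(v)$ into two cliques restricts to such a partition of $N_{G'}(v)$), the graph $G'$ is again a quasi-line graph, and $|V(G')|<|V(G)|$. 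By the vertex-minimality of $G$, we have $\cp(G')\leq 2$, and a second application of Theorem~\ref{thm:clique:sequence} produces a sequence $D_1,\ldots,D_s$ of cliques of~$G'$---hence also cliques of~$G$---that incrementally covers $V(G')$.

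Next I splice the two sequences into the concatenation $C_1,\ldots,C_t,D_1,\ldots,D_s$ and verify that this incrementally covers $V(G)$; Theorem~\ref{thm:clique:sequence} then forces $\cp(G)\leq 2$, the desired contradiction. The length bound $t+s\leq|V'|+|V(G')|=|V(G)|$ is immediate. Every edge of~$G$ is either incident with~$V'$, in which case some $C_i$ covers it, or lies entirely inside $V(G')$, in which case some $D_j$ covers it; so all of~$E(G)$ is covered.

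The central bookkeeping is the incremental star-covering condition for the concatenated sequence. For indices $\ell\leq t-1$ it reduces verbatim to the incremental property of the $C$-sequence. For $\ell=t+j$ with $0\leq j\leq s-1$, the key observation is that $C_1,\ldots,C_t$ already cover \emph{every} edge of~$G$ incident with~$V'$, so all $|V'|\geq t$ vertex stars of vertices of~$V'$ are fully covered in~$G$. Moreover, for any $v\in V(G')$ whose $G'$-star is covered by $D_1,\ldots,D_{j+1}$, the remaining $G$-edges from~$v$ into~$V'$ are handled by the $C$'s, so the full $G$-star of~$v$ is covered too. The incremental property of the $D$-sequence guarantees at least $j$ such vertices in $V(G')$, which together with the $|V'|\geq t$ vertices of~$V'$ give at least $t+j=\ell$ covered $G$-stars among the first $\ell+1$ cliques of the concatenation.

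The main care-point is precisely this splicing verification: one must recognise that the $C$-segment has already dealt with every edge touching~$V'$, so the apparent mismatch between ``$G'$-stars'' and ``$G$-stars'' for vertices of $V(G')$ disappears. Once that is in hand the two incremental covers combine without any reshuffling, and the contradiction with Theorem~\ref{thm:clique:sequence} follows at once. I do not foresee a deeper obstruction: the lemma is essentially an inductive use of Theorem~\ref{thm:clique:sequence} driven by the minimality of~$G$.
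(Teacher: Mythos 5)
Your proof is correct and follows essentially the same route as the paper: peel off the incrementally covered set $V'$, use vertex-minimality together with Theorem~\ref{thm:clique:sequence} to obtain an incremental covering of $G-V'$, and concatenate the two sequences to contradict $\cp(G)>2$. The only (cosmetic) difference is that the paper first normalizes to $|V'|=t$ before splicing, whereas you verify the concatenated incremental condition directly for $|V'|\geq t$; both verifications go through.
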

\begin{proof}
Suppose~$G$ is a quasi-line graph with competition number larger than~$2$ and 
there is a sequence of cliques~$C_1,\ldots,C_t$ that incrementally covers a 
subset~$V'\subseteq V(G)$. If~$|V'|>t$, then there is a strict 
subset~$V''\subsetneq V'$ which is also incrementally covered 
by~$C_1,\ldots,C_t$. Thus, w.l.o.g., we may assume~$|V'| = t$. This implies 
that there is an ordering~$v_1,\ldots,v_t$ of the set~$V'$ such that 
for~$i \in \{1,\ldots,t-1\}$ the vertex star of~$v_i$ is covered 
by~$C_1,\ldots,C_{i+1}$ and such that the star of~$v_t$ is covered 
by~$C_1,\ldots, C_t$. Consider the graph~$G' := G -\{v_1,\ldots,v_t\}$. 
Since~$G$ has minimum size among the quasi-line graphs that have competition 
number larger than~$2$ and since 
induced subgraphs of quasi-line graphs are quasi-line graphs, the graph~$G'$ 
has competition number at most~$2$. 
By Theorem~\ref{thm:clique:sequence} there is an incremental clique 
covering~$C'_1,\ldots,C'_{n-t}$ of~$V(G')$.
The sequence of cliques~$C_1,\ldots,C_t, C'_1,\ldots, C'_{n-t}$ forms an 
incremental clique covering of~$V(G)$, and therefore the competition number 
of~$G$ is at most~$2$, which gives a contradiction. 
\end{proof}

In the rest of this section we show that every quasi-line graph has an incremental clique covering of a 
subset of the vertices. By the previous lemma this shows that there are no minimal counterexamples to Conjecture~\ref{conj:opsut} and thus no counterexamples at all.

\begin{lemma}\label{lem:fuzzyness:removal}
If a graph has a pleasant reconstruction that has a fuzzy vertex pair, then there exists an incremental clique covering of a 
subset of the vertices.
\end{lemma}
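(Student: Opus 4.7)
The strategy is to use the fuzzy pair $\{v,v'\}$ and its witness path $T \in \isg$ to construct an explicit incremental clique cover of the subset $V' := \phi^{-1}(V(T))$ of $V(G)$. Writing $T = h_0 h_1 \cdots h_k$ with $h_0 = \phi(v)$ and $h_k = \phi(v')$, I set $L_i := \phi^{-1}(h_i)$ for $0 \leq i \leq k$, so that $V' = L_0 \cup L_1 \cup \cdots \cup L_k$.

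First I would establish the key structural claim: within $V'$ the only non-edges of $G$ lie between $L_0$ and $L_k$. Indeed, if $\{u,w\} \subseteq V'$ is a non-adjacent pair, then $\phi(u),\phi(w) \in V(T)$, and Property~\ref{prop:fuzzy:means:pathends} applied to $T$ forces $T$ to be a path with endpoints $\phi(u), \phi(w)$; since the endpoints of $T$ are $h_0, h_k$ this gives $\{\phi(u),\phi(w)\} = \{h_0, h_k\}$. A straightforward consequence is that every union $\bigcup_{i \in S} L_i$ with $\{0,k\} \not\subseteq S$ is a clique. In particular $K_L := V' \setminus L_k$ and $K_R := V' \setminus L_0$ are cliques that together cover every edge of $G[V']$ except the $L_0$--$L_k$ edges.

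Next I would handle external edges and the remaining $L_0$--$L_k$ edges. Combining Property~\ref{prop:degree}, Property~\ref{prop:only:one:H:if:deg:greater:2}, and Lemma~\ref{lemma:unique:Hew}, no vertex whose image is an interior vertex $h_i$ (with $0 < i < k$) can have external neighbors: both $H$-edges at $h_i$ already lie on $T$, so the unique maximal tree at each edge is $T$ itself, ruling out external trees through $h_i$. The external edges incident with $L_0$ (respectively $L_k$) can therefore be packed into a single additional clique $E_0$ (resp.\ $E_k$) obtained from the preimages of the unique maximal tree $T_{h_0, e_0}$ (resp.\ $T_{h_k, e_k}$) along the $H$-edge at $h_0$ (resp.\ $h_k$) not on $T$. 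For each $a \in L_0$ with $N_k(a) := \{b \in L_k : a \sim b\}$ nonempty, the bridge $B_a := \{a\} \cup N_k(a) \cup \bigcup_{0 < i < k} L_i$ is a clique by the structural claim and covers the corresponding $L_0$--$L_k$ edges.

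I would then order the cliques as $E_0, K_L, B_{a_1}, \ldots, B_{a_m}, K_R, E_k$ and verify incrementality: after $E_0$ together with $K_L$ the star of $v$ is complete (all neighbors of $v$ lie in $K_L$ or in $E_0$, since $v \not\sim v'$ rules out the only missing $L_k$-neighbor); each subsequent bridge $B_{a_j}$ completes the star of $a_j$; then $K_R$ completes the stars of all interior-layer vertices and of $L_k$-vertices having no $L_0$-fuzzy partner; finally $E_k$ completes $v'$'s star. The step I expect to be the main obstacle is ensuring the bound $|V'| \geq t$ holds uniformly, which becomes delicate in the degenerate case $k = 1$ where $K_L$ and $K_R$ collapse to the disjoint cliques $L_0, L_k$ and bridges carry the entire $L_0$--$L_k$ load; here I would invoke Property~\ref{prop:fuzzy} of the pleasant reconstruction, which supplies a vertex $v'' \sim v$ with $\phi(v'') = \phi(v')$, both to produce an ``anchor'' clique $\{v, v''\} \cup (N(v) \cap L_k)$ and to merge bridges whose $L_k$-neighborhoods coincide, so that each newly added clique in the sequence contributes exactly one completed vertex star and the tally $|V'| \geq t$ is preserved.
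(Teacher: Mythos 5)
Your plan covers $V' = \phi^{-1}(V(T))$, the preimage of the \emph{whole} witness path, and this is exactly where the proof breaks: the structural claims you need about the outside of $T$ are false. Consider $H$ equal to the path $g\,h_0\,h_1\,h_2$, with $\isg = \{T, T'\}$ where $T = h_0h_1h_2$ and $T' = g\,h_0\,h_1$, and let $G$ have $\phi^{-1}(h_0)=\{v,y\}$, $\phi^{-1}(h_1)=\{u,u_2\}$, $\phi^{-1}(h_2)=\{v',w\}$, $\phi^{-1}(g)=\{x,x_2\}$, where exactly the two pairs $\{v,v'\}$ and $\{x,u\}$ are fuzzy and every other pair whose images share a tree is adjacent. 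One checks that this is a pleasant reconstruction: Properties~\ref{prop:if:adj:subgraph}--\ref{prop:no:inclusion} all hold, and the two fuzzy pairs have disjoint image sets $\{h_0,h_2\}$ and $\{g,h_1\}$, which is exactly what Property~$\ref{prop:three:prime}'$ permits. Here the interior-layer vertices do have external neighbors ($u\sim x_2$, $u_2\sim x$, $u_2\sim x_2$), contradicting your key claim. Your argument for that claim misapplies Lemma~\ref{lemma:unique:Hew}: Property~\ref{prop:only:one:H:if:deg:greater:2} only forbids a second tree from branching off $T$ at a vertex of degree at least $3$; it does not prevent a tree such as $T'$ from running along a subpath of $T$ and exiting through an \emph{endpoint} $h_0$ or $h_k$, so the unique tree $T_{h_i,e}$ need not be $T$ and can reach vertices outside $V(T)$. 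Your clique $E_0$ fails for the same reason: in this example $T_{h_0,e_0}=T'$, and $\phi^{-1}(V(T'))\supseteq\{x,u\}$ is not a clique, precisely because $T'$ is itself the witness path of the second fuzzy pair. (A variant with an additional tree $g'gh_0$ makes $E_0$ a clique, but then $E_0$ misses $u,u_2$, and the edges $\{u,x_2\},\{u_2,x\},\{u_2,x_2\}$, all incident with your $V'$, are covered by none of $E_0,K_L,B_a,K_R,E_k$.) Either way the defining requirement that the cliques cover \emph{all} edges incident with $V'$ is violated, and this is not patchable by adding cliques of the form $\phi^{-1}(V(T''))$, since tree preimages are simply not cliques in the presence of other fuzzy pairs.

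The paper sidesteps all of this by shrinking the covered set to the two endpoint layers only, $V' = \phi^{-1}(\phi(v))\cup\phi^{-1}(\phi(v'))$, so that interior layers never need their stars covered. For each $w$ in these two layers it takes $S^2_w := \{w\}\cup\bigl(\phi^{-1}(V(T)\setminus\{\phi(w)\})\cap N(w)\bigr)$ and $S^1_w := \{w\}\cup\bigl(N(w)\setminus S^2_w\bigr)$; the crucial points are that $S^1_w$ (which absorbs all neighbors reached through trees like $T'$ above) is a clique that depends only on $\phi(w)$, and that every edge at the $\phi(v')$-layer lies in $S^1_{v'}$ or in some $S^2_{v_i}$ --- this covering step, together with the count $|V'|\geq t+2$, is where Property~\ref{prop:fuzzy} is used. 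The sequence $S^1_{v_1},S^2_{v_1},\ldots,S^2_{v_t},S^1_{v'}$ is then the desired incremental covering. So the counting concern you flagged as the main obstacle is actually the easy part; the genuine difficulty is covering the edges that leave $T$, and that is what your construction does not survive.
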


\begin{proof}
Suppose~$(H, \phi, \isg)$ is a pleasant reconstruction of a graph~$G$. 
Further suppose~$\{v_1,u_1\}$ is a fuzzy vertex pair and~$T\in \isg$ contains 
both~$v_1$ and~$u_1$. We define for every vertex~$w$ with~$\phi(w) \in 
\{\phi(v_1),\phi(u_1)\}$ two cliques~$S_{w}^{1}$ and~$S_{w}^{2}$ in the 
following way:
We define~$S_{w}^{2} := \{w\} \cup \big(\phi^{-1}(V(T)\setminus 
\{\phi(w)\})\cap N(w)\big)$, and we define~$S_{w}^{1} := \{w\} \cup (N(w) 
\setminus S_{w}^{2})$.
The set~$S_{w}^{2}$ forms a clique by Property~\ref{prop:fuzzy:means:pathends} 
of reconstructions. 
To see that the set~$S_{w}^{1}$ is also a clique, it suffices to show that 
every two distinct vertices~$w_1,w_2 \in S_{w}^{1} \setminus \{w\}$ are 
adjacent. If~$\phi(w_1) = \phi(w)$ and~$w_2$ were not adjacent to~$w_1$, 
then~$\{w_1,w_2\}$ would be a fuzzy vertex pair. Then 
Property~$\ref{prop:three:prime}'$ 
would imply~$\phi(w_2) = \phi(u_1) \subseteq V(T)\setminus \{\phi(w)\}$, which 
implies~$w_2 \notin S_{w}^{1}$, giving a contradiction. Symmetrically, it 
follows from~$\phi(w_2) = \phi(w)$ that~$w_1$ and~$w_2$ are adjacent. Finally 
suppose~$\phi(w_1)$ and~$\phi(w_2)$ are different from~$\phi(w)$. Then there 
are trees~$T_1 , T_2\in \isg\setminus \{T\}$ such that for~$i\in\{1,2\}$, we 
have~$\{\phi(w),\phi(w_i)\} \subseteq V(T_i)$. Since~$\{u_1,v_1\}$ is a fuzzy 
pair, for~$i\in\{1,2\}$ we also have~$V(T) \nsubseteq V(T_i)$.
Since~$V(T_1)$ and~$V(T_2)$ cannot share a vertex of degree at least~$3$ 
in~$H$ (Property~\ref{prop:only:one:H:if:deg:greater:2}), 
either~$\phi(w_1),\phi(w_2) \in V(T_1)$ or~$\phi(w_2),\phi(w_2) \in V(T_2)$. 
W.l.o.g. say~$\phi(w_2),\phi(w_2) \in V(T_1)$. The pair~$\{w_1,w_2\}$ cannot 
be fuzzy, since the graph~$T_1$ is either not a path or has an endpoint 
in~$V(T)$. Therefore,~$w_1$ and~$w_2$ are adjacent. This shows 
that~$S_{w}^{1}$ is a clique.

We claim that~$S_{w}^{1} = S_{w'}^{1}$ if~$\phi(w) = \phi(w')$: Indeed, for 
every vertex in~$v\in S_{w}^{1}$ either~$\phi(v) = \phi(w)$, in which 
case~$v\in S_{w'}^{1}$, or there is a tree~$T'$ different from~$T$ 
with~$\{\phi(w),\phi(v)\}\subseteq V(T')$. 
Since~$\{u_1,v_2\}$ is a fuzzy pair, by Property~$\ref{prop:three:prime}'$ the 
pair~$\{v,w'\}$ is not fuzzy. Therefore~$v$ and~$w'$ are adjacent, and 
thus~$v\in S_{w'}^{1}$.

We also claim that for every~$u\in \phi^{-1}(\phi(u_1))$ and~$x \in N(u)$, 
either~$\{u,x\} \subseteq S_{u}^{1} = S_{u_1}^{1}$, or there is a~$v\in 
\phi^{-1}(\phi(v_1))$ such that~$\{u,x\} \subseteq S_{v}^{2}$: Indeed, if~$x 
\notin S_{u}^{1}$, then~$\phi(x)\in V(T)\setminus \{\phi(u)\}$. 
If~$\phi(x) = \phi(v_1)$ then, we can choose~$v$ as~$x$.
Otherwise let~$v$ be a vertex adjacent to~$u$ with~$\phi(v) = \phi(v_1)$. This 
vertex exists by Property~\ref{prop:fuzzy} of a pleasant reconstruction 
and~$\{u,x\} \subseteq S_{v}^{2}$ holds by definition of~$S_{v}^{2}$.

Since the reconstruction is pleasant, by Property~\ref{prop:fuzzy}, the 
preimages~$\phi^{-1}(\phi(v_1))$ and~$\phi^{-1}(\phi(u_1))$ have size at 
least~$2$. Suppose~$\phi^{-1}(\phi(v_1)) = \{v_1,\ldots,v_t\}$. 
By our previous two claims, the clique sequence~$S_{v_1}^{1}, 
S_{v_1}^{2},S_{v_2}^{2},\ldots, 
S_{v_t}^{2}, S_{u_1}^{1}$ incrementally covers the set~$\phi^{-1}(\phi(v_1)) 
\cup \phi^{-1}(\phi(u_1))$. Thus, by 
Lemma~\ref{thm:characterization:inc:covering}, the graph~$G$ is not vertex 
minimal with~$\cp(G)>2$.
\end{proof}

\begin{lemma}\label{lem:nicely:behaved:cliques}
Let~$G$ be a quasi-line graph without simplicial vertices and~$(H,\phi,\isg)$ 
a pleasant 
reconstruction of~$G$ without fuzzy pairs.
It is possible to associate with every vertex~$v \in V(G)$ a pair 
of cliques~$C_v^1, C_v^2$, such that the following holds:
\begin{itemize}
\item For every vertex~$v \in V(G)$ the cliques~$C_v^1$ and~$C_v^2$ cover all 
edges incident with~$v$.
\item For every~$v\in V(G)$
 and every~$j\in \{1,2\}$ there 
is~$v'\neq v$ and~$j' \in \{1,2\}$
such that~$C_{v'}^{j'} = {C^{j}_{v}}$.
\end{itemize}
\end{lemma}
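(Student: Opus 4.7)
The plan is to take, as the candidate clique for each tree $T\in\isg$, the set $C_T := \phi^{-1}(V(T))$, and associate two such cliques with every vertex. The key observation is that the absence of fuzzy pairs forces every $C_T$ to be a clique: any two distinct vertices of $G$ whose $\phi$-images both lie in $V(T)$ must be adjacent, since otherwise, by Property~\ref{prop:fuzzy:means:pathends}, they would form a fuzzy pair. Property~\ref{prop:if:adj:subgraph} then shows that the family $\{C_T : T\in\isg\}$ covers every edge of $G$.

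Fix a non-simplicial vertex $v$. Since the reconstruction is pleasant, Property~\ref{prop:degree} bounds the degree of $\phi(v)$ in $H$ by~$2$. I would first observe that $\phi(v)$ cannot have degree $0$ or $1$: degree $0$ would make $v$ isolated, and if $\phi(v)$ had degree~$1$ with unique incident edge $e_1$, then every tree $T\in\isg$ containing $\phi(v)$ would have all its paths from $\phi(v)$ start with $e_1$, giving $V(T)\subseteq V(T_{\phi(v),e_1})$ and hence $T=T_{\phi(v),e_1}$ by Property~\ref{prop:no:inclusion}; then $N(v)\cup\{v\}\subseteq C_{T_{\phi(v),e_1}}$ would be a clique, making $v$ simplicial. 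So $\phi(v)$ has exactly two incident edges $e_1,e_2$, and I would set
\[
C_v^1 \;:=\; C_{T_{\phi(v),e_1}}, \qquad C_v^2 \;:=\; C_{T_{\phi(v),e_2}},
\]
using the canonical trees provided by Lemma~\ref{lemma:unique:Hew}(1). Covering is immediate: if $u$ is a neighbor of $v$ with $\phi(u)\neq\phi(v)$, any tree $T\in\isg$ with $\phi(v),\phi(u)\in V(T)$ has a $\phi(v)$-to-$\phi(u)$ path beginning with some $e_i$, and this path belongs to $\mathcal{P}_{\phi(v),e_i}$ and thus lies in $T_{\phi(v),e_i}$, giving $u\in C_v^i$; if $\phi(u)=\phi(v)$ then $u$ lies in both cliques.

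For the sharing property, fix $i\in\{1,2\}$ and write $T^*_i := T_{\phi(v),e_i}$. I would pick a leaf $h^*$ of $T^*_i$ distinct from $\phi(v)$, which exists because $T^*_i$ has at least two vertices by Property~\ref{prop:trees:with:useful:leaves} and hence at least two leaves. Let $e'$ be the edge of $T^*_i$ incident with $h^*$ and choose $v'\in V(G)$ with $\phi(v')=h^*$ (again by Property~\ref{prop:trees:with:useful:leaves}). Then $v'\neq v$ since $h^*\neq\phi(v)$, and Lemma~\ref{lemma:unique:Hew}(2) identifies $T_{\phi(v'),e'}$ with $T^*_i$. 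Applying the same construction to the non-simplicial vertex $v'$, whose $\phi$-image carries two edges one of which is $e'$, yields one of $C_{v'}^1,C_{v'}^2$ equal to $C_{T_{\phi(v'),e'}}=C_{T^*_i}=C_v^i$, giving the required sharing. The main hurdle is spotting that $T_{\phi(v),e_i}$ (rather than some ad-hoc maximal tree in $\mathcal{T}_v:=\{T\in\isg : \phi(v)\in V(T)\}$ obtained by chain-type arguments) is the correct canonical choice; once this is done, Lemma~\ref{lemma:unique:Hew}(2) does the heavy lifting of transporting $T^*_i$ between $\phi(v)$ and its far leaves $\phi(v')$, so both requirements collapse onto the identity $T_{\phi(v),e_i}=T_{\phi(v'),e'}$.
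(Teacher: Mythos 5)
Your proposal is correct and follows essentially the same route as the paper's proof: the same observation that absence of fuzzy pairs makes each $\phi^{-1}(V(T))$ a clique, the same argument that non-simplicial vertices force $\deg_H(\phi(v))=2$, the same definition $C_v^i=\phi^{-1}(V(T_{\phi(v),e_i}))$, and the same use of Lemma~\ref{lemma:unique:Hew}(2) at a far leaf to obtain the sharing property. Your degree-$1$ case is slightly more explicit than the paper's (invoking Property~\ref{prop:no:inclusion} to identify all trees through $\phi(v)$ with $T_{\phi(v),e_1}$), but this is the same argument in expanded form.
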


\begin{proof}
Note that, since the reconstruction does not have fuzzy vertex pairs, any set 
that is of the form~$\phi^{-1}(V(T))$, with~$T\in \isg$, forms a clique in~$G$.

We now argue that for every~$v\in V(G)$ the degree of~$\phi(v)$ in~$H$ is 
exactly 2. If this is not the case, then, by Property~\ref{prop:degree} of a 
pleasant reconstruction, the degree of~$\phi(v)$ is smaller than 2. 
If~$\phi(v)$ has degree~$1$ and is incident with edge~$e'$, 
then~$\phi^{-1}(V(T_{\phi(v),e'}))$ is a clique that covers the vertex star 
of~$v$ and thus~$v$ is simplicial. If~$\phi(v)$ has degree~$0$, then, by 
Property~\ref{prop:trees:with:useful:leaves},~$v$ is of degree 0 and thus 
simplicial.

We associate two cliques~$C_{v}^{1}$,~$C_{v}^{2}$ with every vertex~$v\in 
V(G)$. 
Since~$\phi(v)$ has degree exactly~$2$, there are two edges~$e',e''$ incident 
with~$\phi(v)$. 
We define~$C_{v}^{1} := \phi^{-1}(V(T_{\phi(v),e'}))$ and~$C_{v}^{2} := 
\phi^{-1}(V(T_{\phi(v),e''}))$.

To show the first claim of the lemma suppose~$v$ and~$v'$ are adjacent in~$G$. 
We show that 
one of the cliques~$C_v^1$ or~$C_v^2$ contains~$v'$. 
If~$\phi(v) = \phi(v')$, then~$v'\in C_v^1$. Otherwise, by definition of 
the reconstruction, there is a tree~$T\in \isg$ that contains both~$v$ 
and~$v'$. 
Thus there is a path from~$\phi(v')$ to~$\phi(v)$ that lies entirely in~$T$.
Let~$e'$ be the edge of this path incident with~$\phi(v)$, then the path is 
contained in~$T_{\phi(v),e'}$. 
Thus one of the cliques~$C_v^1$ or~$C_v^2$ contains~$v'$.

To show the second claim suppose~$v$ is a vertex of~$G$ and suppose~$C\in 
\{C_v^1, C_v^2\}$. There is an edge~$e$ incident with~$\phi(v)$ such that~$C = 
\phi^{-1}(V(T_{\phi(v),e}))$. Let~$v'\in V(G)$ be a vertex for 
which~$\phi(v')$ is a leaf of~$V(T_{\phi(v),e})$ 
such that~$\phi(v')\neq \phi(v)$. 
Such a vertex exists by Property~\ref{prop:trees:with:useful:leaves}.
Let~$e'$ be the edge of~$T_{\phi(v),e_1}$ incident with~$\phi(v')$. By 
Lemma~\ref{lemma:unique:Hew},~$T_{\phi(v),e}= T_{\phi(v'),e'}$ and we 
conclude~$C = \phi^{-1}(V(T_{\phi(v'),e'})) \in \{ C^{1}_{v'} ,C^{2}_{v'}\}$.
\end{proof}

\begin{lemma}\label{lem:has:inc:covering}
For every quasi-line graph~$G$ there is an incremental clique covering of a 
subset of the vertices.

\end{lemma}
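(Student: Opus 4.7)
The plan is to handle three cases. If $G$ has a simplicial vertex $v$, then the clique $\{v\} \cup N_G(v)$ by itself already incrementally covers $\{v\}$, because the intermediate prefix condition in the definition of incremental covering is vacuous when $t = 1$. Otherwise $G$ has no simplicial vertex, and by Lemma~\ref{lem:has:pleasant:reconst} it admits a pleasant reconstruction $(H,\phi,\isg)$. If this reconstruction has a fuzzy vertex pair, Lemma~\ref{lem:fuzzyness:removal} already gives the desired incremental clique covering. In the remaining case, the pleasant reconstruction has no fuzzy pair, and Lemma~\ref{lem:nicely:behaved:cliques} applies, associating to each $v \in V(G)$ a pair of cliques $C_v^1, C_v^2$ whose union contains the star of $v$ and such that every clique arising in this way is reused at some other vertex-index combination.

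The key observation for this last case will be to translate the covering combinatorics into an auxiliary multigraph. I would define $X$ to have vertex set equal to the collection of distinct cliques among the $C_v^j$, and one edge per vertex $v \in V(G)$ joining $C_v^1$ to $C_v^2$. Because $v$ is non-simplicial, the cliques $C_v^1$ and $C_v^2$ must be distinct: equality would mean that a single tree $T \in \isg$ contains both edges of $H$ incident with $\phi(v)$, so that $\phi^{-1}(V(T))$ alone would cover the entire star of $v$, contradicting non-simpliciality. Hence $X$ is loopless, and the second clause of Lemma~\ref{lem:nicely:behaved:cliques} guarantees that every vertex of $X$ has degree at least two. Any finite multigraph with minimum degree at least two contains a simple cycle (of length at least two, a pair of parallel edges counting as length-two), so I pick such a cycle $D_1, D_2, \ldots, D_k, D_1$ with distinct corresponding edges $v_1, \ldots, v_k$, where $v_i$ realises the edge between $D_i$ and $D_{(i \bmod k)+1}$.

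The sequence $D_1, D_2, \ldots, D_k$ then incrementally covers $V' := \{v_1, \ldots, v_k\}$: the cardinality condition $|V'| = k$ is immediate; the star of each $v_i$ is contained in $D_i \cup D_{(i \bmod k)+1}$, so all edges incident with $V'$ are covered; and for every $i \in \{1, \ldots, k-1\}$ the prefix $D_1, \ldots, D_{i+1}$ already covers the stars of $v_1, \ldots, v_i$, giving at least $i$ distinct stars in $V'$. Combined with Lemma~\ref{thm:characterization:inc:covering}, this will complete the proof of Conjecture~\ref{conj:opsut}. The only subtle step is the non-simpliciality argument showing $C_v^1 \neq C_v^2$, which is what forces $X$ to be loopless and thus enables the cycle extraction; everything else is a direct verification against the definitions.
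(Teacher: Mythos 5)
Your proof is correct and takes essentially the same approach as the paper: simplicial vertices are handled by a single clique, fuzzy pairs are dispatched via Lemma~\ref{lem:fuzzyness:removal}, and otherwise a cycle is extracted from an auxiliary graph built from the cliques of Lemma~\ref{lem:nicely:behaved:cliques}. Your multigraph on the cliques is just the paper's vertex--clique bipartite incidence graph with each vertex of~$G$ contracted into an edge, and your explicit verification that $C_v^1 \neq C_v^2$ (via non-simpliciality) nicely makes rigorous the looplessness/minimum-degree point that the paper leaves implicit.
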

\begin{proof}
Since for any simplicial vertex~$v$ the clique~$N(v)\cup \{v\}$ incrementally 
covers~$\{v\}$, we can assume that~$G$ does not have
simplicial vertices. Furthermore, by 
Lemmas~\ref{thm:characterization:inc:covering} 
and~\ref{lem:fuzzyness:removal}, it suffices to show the statement for 
quasi-line graphs which have a pleasant 
reconstruction~$(H,\phi,\isg)$ without fuzzy pairs.
For this reconstruction we can apply Lemma~\ref{lem:nicely:behaved:cliques}, 
and associate every vertex~$v$ with two cliques~$C^{1}_{v}$ and~$C^{2}_{v}$. 

We consider the following bipartite graph: One bipartition class consists of 
all vertices~$v\in V(G)$. The other bipartition class consists of all 
cliques~$C$ of~$G$ for which there exists a vertex~$v\in V(G)$ and a~$j\in 
\{1,2\}$ such that~$C = C^{j}_{v}$. The edge set is~$\{\{v,C^{i}_v\} \mid v\in V(G), i\in\{1,2\}   \}$.

This bipartite graph has minimum degree~$2$, since every vertex is adjacent to 
two cliques and since by Lemma~\ref{lem:nicely:behaved:cliques} every 
clique is adjacent to at least two vertices.
Let~$C_1,v_1,\ldots,C_t,v_t$ be a cycle in the bipartite graph. By 
construction, for every~$k\in \{1,\ldots,t-1\}$, the cliques~$C_k$ 
and~$C_{k+1}$ cover the vertex star of~$v_k$. Moreover~$C_1$ and~$C_t$ cover 
the vertex star of~$v_t$. Thus the sequence~$C_1,\ldots,C_t$ incrementally 
covers the set~$\{v_1,\ldots,v_t\}$.
\end{proof}

\begin{theorem}
If~$G$ is quasi-line graph, then~$G$ has competition number at most~$2$.
\end{theorem}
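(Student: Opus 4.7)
The plan is to derive the theorem as an immediate consequence of the two preceding lemmas by a minimum-counterexample argument. Concretely, I would suppose for contradiction that the class of quasi-line graphs with competition number larger than $2$ is non-empty, and pick a vertex-minimal such graph $G$. The goal is to exhibit a direct contradiction between what the minimality buys us and what the structure of $G$ allows.

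From the minimality assumption, Lemma~\ref{thm:characterization:inc:covering} applies and tells us that \emph{no} sequence of cliques can incrementally cover any subset of $V(G)$. On the other hand, Lemma~\ref{lem:has:inc:covering}, which is stated for arbitrary quasi-line graphs with no minimality hypothesis, guarantees that there \emph{does} exist a sequence of cliques that incrementally covers some non-empty subset of $V(G)$. These two facts are directly contradictory, so no such vertex-minimal $G$ exists, and therefore the class of quasi-line graphs with competition number greater than $2$ is empty.

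There is essentially no technical obstacle at this stage, since all the substantive work has been done in building up the incremental covering machinery: the reduction to pleasant reconstructions without fuzzy pairs via Lemma~\ref{lem:fuzzyness:removal}, the association of two cliques $C_v^1, C_v^2$ to each vertex via Lemma~\ref{lem:nicely:behaved:cliques}, the bipartite-graph cycle argument in Lemma~\ref{lem:has:inc:covering}, and the equivalence with the Lundgren--Maybee characterization provided by Theorem~\ref{thm:clique:sequence}. The only thing to be careful about is that Lemma~\ref{thm:characterization:inc:covering} rules out incremental coverings of \emph{any} subset (not just of $V(G)$ itself), which is exactly the form provided by Lemma~\ref{lem:has:inc:covering}; so the two statements meet cleanly. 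The argument therefore amounts to a one-line invocation of each lemma, and the theorem follows.
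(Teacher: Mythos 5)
Your proof is correct and is essentially identical to the paper's own argument: both combine Lemma~\ref{lem:has:inc:covering} (every quasi-line graph admits an incremental clique covering of some vertex subset) with Lemma~\ref{thm:characterization:inc:covering} (a vertex-minimal counterexample admits no such covering) to conclude that no minimal counterexample, and hence no counterexample, exists. Nothing further is needed.
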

\begin{proof}
By Lemma~\ref{lem:has:inc:covering} every quasi-line graph~$G$ has an 
incremental clique covering of a 
subset of the vertices. Lemma~\ref{thm:characterization:inc:covering} then 
shows 
that there are no vertex minimal quasi-line graphs with competition number 
greater than~$2$, and thus all quasi-line graphs have competition number at 
most~$2$.
\end{proof}

\begin{corollary}[Chen, Jacobson, K\'{e}zdy, Lehel, Scheinerman, 
Wang~\cite{Chen200017}]
Every quasi-line graph on~$n$ vertices has a clique edge covering consisting 
of~$n$ cliques.
\end{corollary}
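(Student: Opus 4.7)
The plan is to derive this corollary directly from the preceding theorem, which states that every quasi-line graph has competition number at most~$2$, combined with the Lundgren--Maybee characterization (Theorem~\ref{thm:char:lund:may}).

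More specifically, I would argue as follows. Let~$G$ be a quasi-line graph on~$n$ vertices. By the theorem just established, $\cp(G) \leq 2$. Theorem~\ref{thm:char:lund:may} then guarantees the existence of a clique edge covering~$C_1,\ldots,C_n$ together with a vertex ordering~$v_1,\ldots,v_n$ such that~$v_i \notin C_j$ whenever~$i \geq j+2$. Discarding the ordering information and keeping only the covering, we obtain~$n$ cliques that together cover every edge of~$G$, which is exactly the statement of the corollary.

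I do not expect any obstacles in this step: Theorem~\ref{thm:char:lund:may} is invoked as a black box, and its conclusion already delivers a clique edge covering of cardinality exactly~$n$. One small subtlety worth flagging is that the covering produced has size~$n$ rather than at most~$n$, but since cliques in the covering are allowed to coincide or to be subsumed by one another, the statement ``consisting of~$n$ cliques'' is compatible either way; no additional argument is needed. Thus the proof reduces to a one-line appeal to the preceding theorem together with the Lundgren--Maybee characterization.
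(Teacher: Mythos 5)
Your proposal is correct. The paper gives no explicit proof of this corollary; the derivation it intends (sketched in its introduction) runs through Opsut's lower bound on the competition number in terms of the clique edge cover number, which for $\cp(G)\leq 2$ yields a cover of size at most~$n$. You instead invoke the Lundgren--Maybee characterization (Theorem~\ref{thm:char:lund:may}) and read off the covering $C_1,\ldots,C_n$ directly. Both are immediate one-line consequences of the theorem that quasi-line graphs have competition number at most~$2$; your route has the mild advantage of being entirely self-contained within the paper, since Theorem~\ref{thm:char:lund:may} is quoted there in full, whereas Opsut's inequality would be an external citation. Your remark about ``exactly~$n$'' versus ``at most~$n$'' cliques is harmless: any covering by fewer than~$n$ cliques can be padded with distinct single-vertex cliques, which cover no edges.
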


\section{Competition numbers and holes}\label{sec:competition_number_bound}
To study the relationship between the competition number and the number of holes in a graph, we introduce some additional terminology.
Recall that a hole in a graph is a 
subset of at least~$4$ vertices that induces a simple cycle.
We denote by~$\hole(G)$ the number of holes in a graph~$G$ and 
by~$\hole(v)$ the number of holes containing the vertex~$v$.
For a graph~$G$, we let~$\omega(G)$ be the size of 
a largest clique of~$G$. 
For a vertex~$v$ in a graph~$G$ we define its 
\emph{simplicial defect} as~$\defect(v) := |N(v)| - \omega(G[N(v)])$. 
The terminology is justified because a 
vertex is simplicial if and only if its defect is~$0$.

We say a vertex~$v$ is \emph{good} if for every two non-adjacent
vertices~$u,u'\in N(v)$ there exists a hole that contains~$v$,~$u$ and~$u'$.
Note that such a hole cannot contain any other vertices from~$N(v)$ 
besides~$u$ and~$u'$. This implies that~$\defect(v) \leq \hole(v)$ for every 
good vertex~$v$.

Next we show that every graph has a good vertex. The proof is a generalization
of a proof by Dirac~\cite{Dirac} that shows
that every non-complete chordal graph has two non-adjacent simplicial vertices.

\begin{lemma}\label{good:lemma}
If~$G$ is a non-complete graph, then~$G$ contains two non-adjacent 
good vertices. 
\end{lemma}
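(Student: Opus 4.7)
The plan is to mimic Dirac's classical proof by induction on $|V(G)|$, using minimal vertex separators. The base case is immediate: a non-complete graph on at most two vertices consists of two non-adjacent isolated vertices, each of which is vacuously good. For the inductive step, I would handle the disconnected and connected cases separately.

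If $G$ is disconnected, I would pick two distinct components $G_1, G_2$ and extract a good vertex of $G$ from each. Concretely, if $G_i$ is complete, then every vertex of $G_i$ is simplicial in $G_i$ and, since $N_G(w) = N_{G_i}(w)$, is also simplicial and hence vacuously good in $G$. If $G_i$ is not complete, the induction hypothesis supplies a good vertex of $G_i$, and it remains good in $G$ because $N_G(w) = N_{G_i}(w)$ and any hole of $G_i$ is a hole of $G$. Combining one such vertex from each of $G_1, G_2$ yields two non-adjacent good vertices of $G$.

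For the connected case, I would imitate Dirac directly. Fix non-adjacent vertices $u, v \in V(G)$ and a minimal $u$-$v$-separator $S$. Let $C_u$ and $C_v$ be the components of $G - S$ containing $u$ and $v$ respectively, and set $A := G[V(C_u) \cup S]$ and $B := G[V(C_v) \cup S]$. I would aim to produce a good vertex of $G$ inside $V(C_u)$ and another inside $V(C_v)$; such vertices are automatically non-adjacent in $G$ since $S$ separates $C_u$ from $C_v$. Focus on $A$, the argument for $B$ being symmetric. If $A$ is complete, then for every $w \in V(C_u)$ the neighborhood $N_G(w)$ is contained in $V(A)$ and forms a clique, so $w$ is simplicial and hence good in $G$. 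Otherwise, the induction hypothesis applied to $A$ yields two non-adjacent good vertices $x, y \in V(A)$; if at least one of them, say $x$, lies in $V(C_u)$, then $N_G(x) \subseteq V(A)$ and any hole of $A$ witnessing goodness of $x$ is also a hole of $G$ (since $A$ is induced in $G$), so $x$ is good in $G$ as required.

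The hard part will be the residual case in which both $x$ and $y$ lie in $S$. In Dirac's original argument this case is ruled out for free, because chordality forces a minimal vertex separator to be a clique and hence cannot contain two non-adjacent vertices; for general graphs this clique property fails, and this is where I expect the main technical effort of the proof to be spent. I would try to rule out the case in one of two ways: either by strengthening the induction hypothesis so that the good vertex it delivers is guaranteed to lie outside a prescribed set (applied with $S$ as the excluded set), or by exploiting the minimality of $S$: both $x$ and $y$ must then have a neighbor in $V(C_u)$, and splicing these neighbors with a shortest induced path inside $C_u$ should exhibit a hole of $A$ that allows one to transfer a goodness witness from $S$ into $V(C_u)$, thereby still producing the desired good vertex of $G$ inside $V(C_u)$.
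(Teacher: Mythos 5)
Your setup is sound and you have correctly located the crux: nothing prevents both good vertices supplied by the induction hypothesis (applied to the induced subgraph $A = G[V(C_u)\cup S]$) from lying inside the separator $S$, where goodness in $A$ says nothing about goodness in $G$. However, neither of your two suggested repairs closes this gap, so the proposal as it stands is incomplete. A strengthened hypothesis of the form ``there is a good vertex avoiding a prescribed set'' is false for arbitrary prescribed sets (take the set to be all of $V(G)$), and the natural condition that would make it usable --- that the prescribed set be a clique --- is exactly what fails for a minimal separator of a general, non-chordal graph. Your splicing idea fails for a different reason: goodness is not witnessed by a single hole, it is a universally quantified property over all non-adjacent pairs of neighbors, so exhibiting one hole through $x$, $y$ and part of $C_u$ cannot ``transfer'' goodness from a vertex of $S$ to a vertex of $C_u$.

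The paper closes this gap with a move you did not consider: it applies the induction hypothesis not to the induced subgraph $A$ but to an auxiliary graph $G_A$ obtained from $G[V(C_u)\cup S]$ by adding all edges between vertices of $S$, so that $S$ becomes a clique. Then two non-adjacent good vertices of $G_A$ cannot both lie in $S$, so at least one lies in $C_u$ --- your residual case is ruled out by construction rather than analyzed. The price is that $G_A$ is no longer an induced subgraph of $G$, so goodness must be transferred back to $G$ by a repair argument: an induced cycle of $G_A$ meets the clique $S$ in at most two vertices, hence uses at most one ``fake'' edge $\{x,y\}\subseteq S$; by minimality of $S$, each of $x$ and $y$ has a neighbor in every component of $G-S$ (otherwise $S$ minus that vertex would be a smaller separator), so there is a shortest $x$--$y$ path with all internal vertices on the other side $B$, and replacing the fake edge by such a path yields an induced cycle of $G$. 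It is precisely this pairing --- complete the separator to a clique, then repair fake edges via paths through the other side --- that your outline is missing, and without it the residual case remains a genuine gap.
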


\begin{proof}
We show the lemma by induction on the number of vertices of~$G$.

Let~$G$ be a smallest non-complete graph which is a counterexample
to the lemma, and let~$S$ be a minimal vertex separator (which
is empty in the case that~$G$ is disconnected). Let~$(A,B)$ be a
non-trivial partition of the vertices in~$G-S$ such that there is no
edge of~$G$ with an endpoint in~$A$ and an endpoint in~$B$. Consider
the graph~$G_A$ which is obtained in the following way: Start
with the subgraph of~$G$ that is induced by the vertex set~$A\cup S$,
then add all edges between vertices of~$S$, as detailed in
Figure~\ref{fig:construction_of_GA}. (Be aware that~$G_A$ is not
necessarily an induced subgraph of~$G$.)

\begin{figure}[htb]
\centering

\begin{picture}(0,0)%
\includegraphics{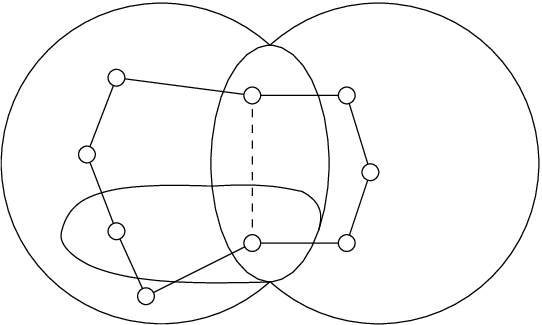}%
\end{picture}%
\setlength{\unitlength}{4144sp}%
\begingroup\makeatletter\ifx\SetFigFont\undefined%
\gdef\SetFigFont#1#2#3#4#5{%
  \reset@font\fontsize{#1}{#2pt}%
  \fontfamily{#3}\fontseries{#4}\fontshape{#5}%
  \selectfont}%
\fi\endgroup%
\begin{picture}(4114,2463)(4964,-2755)
\put(5806,-1411){\makebox(0,0)[lb]{\smash{{\SetFigFont{12}{14.4}{\rmdefault}{\mddefault}{\updefault}{\color[rgb]{0,0,0}$A$}%
}}}}
\put(6166,-2626){\makebox(0,0)[lb]{\smash{{\SetFigFont{12}{14.4}{\rmdefault}{\mddefault}{\updefault}{\color[rgb]{0,0,0}$v$}%
}}}}
\put(7876,-1411){\makebox(0,0)[lb]{\smash{{\SetFigFont{12}{14.4}{\rmdefault}{\mddefault}{\updefault}{\color[rgb]{0,0,0}$B$}%
}}}}
\put(6886,-2356){\makebox(0,0)[lb]{\smash{{\SetFigFont{12}{14.4}{\rmdefault}{\mddefault}{\updefault}{\color[rgb]{0,0,0}$u'$}%
}}}}
\put(6166,-2086){\makebox(0,0)[lb]{\smash{{\SetFigFont{12}{14.4}{\rmdefault}{\mddefault}{\updefault}{\color[rgb]{0,0,0}$N(v)$}%
}}}}
\put(7021,-1546){\makebox(0,0)[lb]{\smash{{\SetFigFont{12}{14.4}{\rmdefault}{\mddefault}{\updefault}{\color[rgb]{0,0,0}$S$}%
}}}}
\put(5626,-2131){\makebox(0,0)[lb]{\smash{{\SetFigFont{12}{14.4}{\rmdefault}{\mddefault}{\updefault}{\color[rgb]{0,0,0}$u$}%
}}}}
\end{picture}%

\caption{The figure illustrates the construction of~$G_A$ used in the proof of 
Lemma~\ref{good:lemma}. Edges within the vertex separator~$S$ are added 
(dashed line) and~$G_A$ is then obtained as the subgraph  induced by~$A\cup 
S$. The figure also illustrates how, in the proof, the induced cycle~$C$
in~$G_A$ is altered to obtain an induced cycle in~$G$ which contains~$v$,~$u$
and~$u'$.
}
\label{fig:construction_of_GA}
\end{figure}

We next argue that~$G_A$ has a good vertex~$v$ in~$A$. 
Obviously, if~$G_A$ is a complete
graph then every vertex of~$G_A$ is good. Otherwise, by the induction hypothesis,~$G_A$ has two
non-adjacent good vertices. Since~$S$ is a clique of~$G_A$, by
construction, at least one of the good vertices is in~$A$.

We now show that~$v$ is also good in~$G$. From the
definition, this is equivalent to showing that every path~$u,v,u'$
of distinct vertices lies on an induced cycle (perhaps a triangle).
Let~$u,u'$ be distinct
neighbors of~$v$ in~$G$. Since~$v$ is good in~$G_A$, there is
an induced cycle~$C$ in~$G_A$ that includes~$\{v,u,u'\}$. Since
the cycle is induced,~$C$ contains at most 2 vertices of~$S$.
This means that~$C$ is also an induced cycle in~$G$ unless~$C\cap S$
is an edge~$\{x,y\}$ that is not an edge of~$G$. In that case, if
$x$ was not adjacent to every component of~$G-S$, then~$S-x$ would be
a smaller separator, and similarly for~$y$, so there must be a path
$P$ in~$G$ from~$x$ to~$y$ whose internal vertices all lie in~$B$.
Choosing such a path of minimal length, we find that~$C\cup V(P)$ is
an induced cycle in~$G$.

Thus~$v$ is a vertex in~$A$ that is good in the graph~$G$. By
symmetry there is also a good vertex in~$B$. Since vertices in~$A$
are not adjacent to vertices in~$B$, this concludes the proof.
\end{proof}

\begin{corollary}\label{all:good:corollary}
Every graph has a good vertex.
\end{corollary}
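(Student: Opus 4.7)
The corollary is an immediate consequence of Lemma~\ref{good:lemma}, and the plan is simply to split into two cases based on whether the graph is complete. First I would handle the non-complete case: if $G$ is not complete, then Lemma~\ref{good:lemma} directly produces two non-adjacent good vertices, and in particular at least one good vertex exists, which is all we need.

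For the complete case, the plan is to observe that goodness is a vacuous condition there. Concretely, if $G$ is a complete graph, then for every vertex $v$, the neighborhood $N(v)$ induces a clique, so there are no two non-adjacent vertices $u,u' \in N(v)$. The universally-quantified defining condition for a good vertex therefore holds vacuously, and every vertex of $G$ is good.

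I do not anticipate any obstacle: the only mild subtlety is to note that the empty graph on one vertex (or any complete graph) should still be acknowledged as having a good vertex via the vacuous argument, so that the corollary genuinely covers all graphs and not only the non-complete ones addressed by the lemma. Combining the two cases yields that every graph has a good vertex.
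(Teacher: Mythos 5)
Your proposal is correct and matches the paper's proof exactly: the paper also splits into the complete case (where every vertex is good, vacuously) and the non-complete case (where Lemma~\ref{good:lemma} supplies a good vertex). Your additional remark spelling out why the condition is vacuous for complete graphs is a fine elaboration but does not change the argument.
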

\begin{proof}
If a graph is complete, then every vertex is good. If a graph is not complete,
it contains a good vertex by Lemma~\ref{good:lemma}.
\end{proof}

\begin{lemma}\label{lem:comp:nr:vs:defect}
If~$v$ is a vertex of a graph~$G$ on at least 2 vertices, then~$\cp(G) \leq 
\max \{1,\cp(G-v)\}   
 + \defect(v)$.   
\end{lemma}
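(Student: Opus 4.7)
The plan is to take a DAG whose competition graph realizes $G-v$ (plus some isolated vertices) and extend it by one new vertex together with a few extra sinks, exploiting the fact that $N(v)$ contains a clique of size $|N(v)|-\defect(v)$. Set $k := \max\{1,\cp(G-v)\}$. First I take a DAG $D'$ whose competition graph is $(G-v)$ together with $k$ isolated vertices $i_1,\ldots,i_k$; in the case $\cp(G-v)=0$ I simply add one dummy isolated vertex to guarantee $k\geq 1$. A useful preparatory observation is that the $i_j$ may be assumed to be sinks in $D'$: since each $i_j$ is isolated in the competition graph, it shares no out-neighbor with any other vertex, so every out-arc of $i_j$ can be erased without changing the competition graph.

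Next, let $K \subseteq N(v)$ be a maximum clique of $G[N(v)]$ and write $N(v)\setminus K = \{u_1,\ldots,u_d\}$ with $d = \defect(v)$. I form the DAG $D$ by adding to $D'$ the new vertex $v$ together with $d$ new sinks $w_1,\ldots,w_d$, and the following new arcs: $v\to i_1$ and $u\to i_1$ for every $u\in K$, as well as $v\to w_j$ and $u_j\to w_j$ for every $j\in\{1,\ldots,d\}$. The graph $D$ is acyclic because every new arc enters a sink (either $i_1$ or some $w_j$) while $v$ is itself a new source.

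To finish I verify that the competition graph of $D$ is $G$ together with the $k+d$ isolated vertices $i_1,\ldots,i_k,w_1,\ldots,w_d$. Adjacencies within $V(G-v)$ are unchanged because no old vertex acquired new out-neighbors other than freshly added sinks. The shared sink $i_1$ contributes exactly the edges $\{v,u\}$ for $u\in K$ and $\{u,u'\}$ for $u,u'\in K$, and the latter are already edges of $G$ since $K$ is a clique; each $w_j$ contributes only the edge $\{v,u_j\}$. Hence $v$ is adjacent in the competition graph to precisely $N(v)$, and the $i_j$'s and $w_j$'s remain isolated because they are sinks. This yields $\cp(G)\leq k+d=\max\{1,\cp(G-v)\}+\defect(v)$. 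The one delicate point is the re-use of the existing isolated vertex $i_1$ as the common sink for $\{v\}\cup K$: extraneous competition edges are avoided because $i_1$ has no out-arcs, and directed cycles are prevented by the preparatory step making $i_1$ a sink.
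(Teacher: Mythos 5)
Your construction has a genuine gap, and it sits exactly at the point you call ``delicate''. In $D'$ the vertex $i_1$ is isolated in the \emph{competition graph}, but as a vertex of the digraph it may still have \emph{incoming} arcs; your preparatory step only removes its outgoing arcs. Worse, whenever $\cp(G-v)\geq 1$ the vertex $i_1$ \emph{must} have in-arcs: if some $i_j$ had none, then after deleting its out-arcs it would be a completely isolated vertex of $D'$, and removing it would give a DAG whose competition graph is $(G-v)$ together with $k-1$ isolated vertices, contradicting the minimality of $\cp(G-v)$. Now let $S\subseteq V(G-v)$ be the set of in-neighbors of $i_1$ in $D'$ (a clique of $G-v$, since its members compete via $i_1$). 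After you add the arcs $v\to i_1$ and $u\to i_1$ for $u\in K$, the in-neighborhood of $i_1$ becomes $S\cup K\cup\{v\}$, so the competition graph of $D$ contains \emph{all} edges between $S$ and $K\cup\{v\}$, and nothing forces these to be edges of $G$. Concretely, take $G$ to be the path $v\,$--$\,a\,$--$\,b$: then $G-v=K_2$, $\cp(G-v)=1$, $K=\{a\}$, $\defect(v)=0$, and $D'$ must have arcs $a\to i_1$, $b\to i_1$. Your construction adds $v\to i_1$, which makes $\{v,a,b\}$ a triangle in the competition graph, although $\{v,b\}\notin E(G)$. So the claim that $i_1$ contributes ``exactly'' the edges inside $K\cup\{v\}$ is false; your closing remark conflates the two roles of $i_1$ --- having no out-arcs only ensures that $i_1$ itself stays isolated, while extraneous edges arise from $i_1$ serving as common \emph{prey}, which is governed by its in-arcs.

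The paper's proof is your construction plus the one missing idea needed to repair this. Before adding any new arcs, it \emph{redirects} every arc of $D_v$ that ends at $u$ (your $i_1$) so that it ends at the new vertex $v$ instead. This is safe: the old in-neighbors of $u$ formed a clique of $G-v$, and their pairwise competition edges are now realized through $v$ rather than through $u$, while $v$ acquires no unwanted adjacencies because its in-neighborhood is exactly the old in-neighborhood of $u$; since $u$ was a sink and $v$ is new, this amounts to swapping the names of $u$ and $v$, so acyclicity is preserved. After the redirection $u$ has no in-arcs at all, and only then is it used as the common prey of $C\cup\{v\}$ --- precisely the role you wanted $i_1$ to play. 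With that amendment your argument goes through; without it, the verification breaks down for every graph with $\cp(G-v)\geq 1$.
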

\begin{proof}
Let~$t := \max\{1,\cp(G-v)\}$. By definition, there exists an acyclic directed graph~$D_v$
with competition graph~$(G-v) \dunion t  K_1$, where~$t  K_1$ is an
empty graph on~$t$ vertices. By possibly deleting unncessary arcs, we can achieve that~$D_v$ has a vertex~$u \in V(t  K_1)$ that does not have any outgoing arcs. Moreover, we can require that~$v\notin V(t  K_1)$, implying that~$u\notin V(G)$.

We design an acyclic directed graph~$D$ defined on the vertex set~$V(D_v) 
\dunion \{v\} \dunion \{v_1,\ldots,v_{\defect(v)}\}$ such that~$G \dunion t
 K_1 \dunion \defect(v) K_1$ is the competition graph of~$D$. Its 
existence implies~$\cp(G) \leq t+ \defect(v)$ and thus the theorem. 

To design~$D$, we start with~$D_v$. To the vertex set of~$D_v$, we add the 
set~$\{v,v_1,v_2,\ldots,v_{\defect(v)}\}$.
Let~$C$ be a largest clique 
in~$G[N(v)]$ and let~$\{z_1,\ldots,z_{\defect(v)}\} := N(v)\setminus C$ be the
set of neighbors of~$v$ not contained in~$C$.

We first describe the set of arcs of~$D$ informally. We redirect all 
arcs that previously ended in~$u$ to now end in~$v$.
Next we add all arcs~$(x,u)$ with~$x\in C$ and the arc~$(v,u)$. This is possible since~$u \notin \{v\} \cup C$.
For each~$i\in \{1,\ldots,\defect(v)\}$ we add the arcs~$(v,v_i)$ 
and~$(z_i,v_i)$. 

More formally,~$D$ is the directed graph on the vertex set~$V(D_v) \dunion \{v\} 
\dunion \{v_1,\ldots,v_{\defect(v)}\}$. Its set of arcs is formed as follows:
We set
\begin{eqnarray*}
E_1 & :=& \{(y,u) \mid y \in N_{D_v}(u)\}\\
E_2 &:= & \{(y,v) \mid y \in N_{D_v}(u)\} \\
E_3 &:= & \{(x,u) \mid x\in C\} \cup \{(v,u)\}\\
E_4 &:= & \{(v,v_i),(z_i,v_i) \mid i\in \{1,\ldots,\defect(v) \}\},
\end{eqnarray*}
and we define the arcs of~$D$ as~$E(D) := (E(D_v) \setminus 
E_1) \cup E_2 \cup E_3\cup E_4$.

The directed graph~$D$ is acyclic: Prior to our first modification of the arcs,~$v$ is 
an isolated vertex and~$u$ has no outgoing arcs. Thus our first modification,
i.e., removing the set~$E_1$ and adding the set~$E_2$,
redirects the arcs ending in~$u$ to end in~$v$. This can be interpreted as 
swapping the names of~$u$ and~$v$ and thus does not introduce cycles. 
By adding the arcs sets~$E_3$ and~$E_4$, we exclusively add 
arcs that end in a vertex in~$\{u, v_1,\ldots,v_{\defect(v)}\}$. However, no 
vertex in~$\{u, v_1,\ldots,v_{\defect(v)}\}$ has outgoing arcs, and thus no 
arcs in~$E_3$ or~$E_4$ can be part of a cycle. This shows that~$D$ is acyclic.

By construction~$G \dunion t  K_1 
\dunion \defect(v) K_1$ 
is the 
competition graph of~$D$ and thus we conclude that~$G$ has competition number 
at most~$t+\defect(v)$, which concludes the theorem.
\end{proof}

Lemma~\ref{lem:comp:nr:vs:defect} and Corollary~\ref{all:good:corollary} allow
us to prove Kim's conjecture~\cite{MR2176262} concerning the relation 
between the competition number and the number of holes in a graph.

\begin{theorem}\label{thm:kim:conj}
If a graph has at most~$k$ holes, then it has competition number at most~$k+1$.
\end{theorem}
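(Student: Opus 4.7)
The plan is to proceed by strong induction on $|V(G)|$. The base case $|V(G)| \leq 1$ is immediate, since an isolated vertex is already a competition graph (of the trivial digraph on one vertex) and hence has competition number $0 \leq k+1$.

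For the inductive step, let $G$ be a graph with $\hole(G) \leq k$ on at least two vertices. First, I would invoke Corollary~\ref{all:good:corollary} to extract a good vertex $v$ of $G$. The definition of a good vertex directly delivers the key inequality $\defect(v) \leq \hole(v)$: each pair of non-adjacent neighbors of $v$ gives rise to a distinct hole through $v$ (the remark after the definition of ``good'' notes that such a hole contains no other neighbor of $v$), hence the $\defect(v)$ non-adjacent pairs in $N(v)$ are witnessed by at least $\defect(v)$ distinct holes passing through $v$.

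Next, I would observe the ``hole accounting'' for the deletion of $v$: every hole of $G$ that contains $v$ ceases to exist in $G-v$, while every hole of $G-v$ is still a hole of $G$ (deleting a vertex does not create new chords). Consequently $\hole(G-v) \leq \hole(G) - \hole(v) \leq k - \hole(v)$, and the inductive hypothesis applied to $G-v$ yields $\cp(G-v) \leq k - \hole(v) + 1$. Combining this with Lemma~\ref{lem:comp:nr:vs:defect} and the bound $\defect(v) \leq \hole(v)$ gives
\[
\cp(G) \;\leq\; \max\{1,\cp(G-v)\} + \defect(v) \;\leq\; \max\{1,\,k-\hole(v)+1\} + \hole(v) \;=\; k+1,
\]
where the final equality uses $\hole(v) \leq \hole(G) \leq k$ to guarantee that $k - \hole(v) + 1 \geq 1$ and hence that the $\max$ collapses to the inductive bound.

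There is no real obstacle here, since the two essential ingredients, Corollary~\ref{all:good:corollary} and Lemma~\ref{lem:comp:nr:vs:defect}, already package the difficult combinatorial and digraph-theoretic content respectively. The only point requiring a moment's care is verifying that the $\max\{1,\cdot\}$ in the recurrence from Lemma~\ref{lem:comp:nr:vs:defect} does not spoil the telescoping, which is handled by the trivial inequality $\hole(v) \leq k$; all the rest is bookkeeping. One small sanity check worth recording is the extreme case $k=0$, in which hole-freeness coincides with chordality and the theorem recovers the classical bound $\cp(G) \leq 1$ for chordal graphs.
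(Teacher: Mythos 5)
Your proof is correct and takes essentially the same route as the paper: induction on the number of vertices, extraction of a good vertex via Corollary~\ref{all:good:corollary}, the inequality $\defect(v) \leq \hole(v)$ for good vertices, and Lemma~\ref{lem:comp:nr:vs:defect} to close the induction. The only cosmetic difference is that you carry the bound $\hole(G-v) \leq k - \hole(v)$ explicitly and check that the $\max\{1,\cdot\}$ collapses, whereas the paper phrases the same accounting as $\hole(G) = \hole(G-v) + \hole(v)$; the content is identical.
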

\begin{proof}
We show the statement by induction on~$n$, the number of vertices of~$G$.
For~$n= 1$ the statement is obvious.
Now suppose~$n\geq 2$.
By Corollary~\ref{all:good:corollary} the graph~$G$ contains a 
good vertex~$v$. Since~$\hole(G) = \hole(G-v) + \hole(v)$, it suffices to show 
that~$\cp(G) \leq \max\{1,\cp(G-v)\} +\hole(v)$. Since~$v$ is good and 
therefore~$\defect(v) \leq \hole(v)$ the theorem follows from 
Lemma~\ref{lem:comp:nr:vs:defect}.
\end{proof}

For a graph~$G$, the \emph{cycle space} is the~$\mathbb{F}_2$ 
vector space where the vectors are those sets of edges of~$G$ which form a 
subgraph that has only vertices of even degree. Addition in this vector space 
is defined as the symmetric difference of the edge sets. Every hole is a vector
in the cycle space. For a graph~$G$ let the hole space~$\mathcal{H}(G)$ be 
the subspace of the cycle space spanned by all the holes.
For various graphs for which Theorem~\ref{thm:kim:conj} was previously known,
a stronger upper bound for the competition number in terms of the dimension of
the hole space has recently been proven~\cite{ComHoleSpace}. We extend this
stronger upper bound to all graphs.

\begin{theorem}\label{thm:dimension:hole:space}
For any graph~$G$, we have~$\cp(G)\leq \dim(\mathcal{H}(G)) +1$.
\end{theorem}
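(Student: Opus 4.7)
The plan is to strengthen the induction used for Theorem~\ref{thm:kim:conj}. We proceed by induction on $|V(G)|$; the case $|V(G)|=1$ is immediate. For the inductive step, let $v$ be a good vertex of $G$ provided by Corollary~\ref{all:good:corollary}, and set $d := \defect(v)$. By Lemma~\ref{lem:comp:nr:vs:defect} and the inductive hypothesis applied to $G-v$,
\[
\cp(G)\ \leq\ \max\{1,\cp(G-v)\} + d\ \leq\ \dim(\mathcal{H}(G-v)) + 1 + d,
\]
so it suffices to prove the key inequality $\dim(\mathcal{H}(G)) \geq \dim(\mathcal{H}(G-v)) + d$.

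Every hole of $G-v$ is also a hole of $G$, since its vertex set avoids $v$ and the induced subgraphs in $G-v$ and $G$ therefore coincide; consequently $\mathcal{H}(G-v)$ embeds into $\mathcal{H}(G)$ via the inclusion of edge sets. Consider the linear map $\phi\colon \mathcal{H}(G) \to \mathbb{F}_2^{N(v)}$ sending an edge set $C$ to the characteristic vector of $\{u \in N(v) \mid \{v,u\} \in C\}$. This map vanishes on $\mathcal{H}(G-v)$, and for any hole $H$ through $v$ the image $\phi(H)$ is the characteristic vector of the unique pair $\{u,u'\}$ of (necessarily non-adjacent) neighbors of $v$ lying on $H$. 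To establish the key inequality it therefore suffices to exhibit $d$ holes through $v$ whose $\phi$-images are linearly independent in $\mathbb{F}_2^{N(v)}$: such holes are then linearly independent modulo $\mathcal{H}(G-v)$, as any nontrivial combination lying in $\mathcal{H}(G-v)$ would be in $\ker\phi$.

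Let $\bar G := \overline{G[N(v)]}$. Each candidate $\phi$-image is the characteristic vector of an edge of $\bar G$, and because $v$ is good, every edge of $\bar G$ is realized by some hole through $v$. A family of such edge-vectors is linearly independent over $\mathbb{F}_2$ precisely when the corresponding edges form a forest, so it remains to find a forest with $d$ edges in $\bar G$. Since $\omega(G[N(v)]) = |N(v)| - d$, the graph $\bar G$ has a maximum independent set $I$ of size $|N(v)|-d$; each of the $d$ vertices of $\bar G$ outside $I$ has some neighbor in $I$ (otherwise $I$ would not be maximum), and choosing one such edge per outside vertex yields $d$ edges forming a disjoint union of stars centered in $I$, hence a forest. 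Selecting, via goodness of $v$, a hole through $v$ realizing each of these $d$ edges completes the proof. I expect the one substantive step to be this forest-existence argument in $\bar G$; the rest is direct bookkeeping from the hole-space definitions together with Lemma~\ref{lem:comp:nr:vs:defect}.
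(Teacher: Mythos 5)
Your proof is correct and is essentially the paper's own argument: the same induction on the number of vertices reducing, via Lemma~\ref{lem:comp:nr:vs:defect} and Corollary~\ref{all:good:corollary}, to the key inequality $\dim(\mathcal{H}(G))\geq\dim(\mathcal{H}(G-v))+\defect(v)$ for a good vertex~$v$, followed by the same family of holes, since your maximum independent set~$I$ of $\overline{G[N(v)]}$ is precisely the paper's maximum clique~$C$ of $G[N(v)]$, and your holes realizing the star-forest edges are exactly the paper's holes~$H_z$ joining each $z\in N(v)\setminus C$ through~$v$ to a non-neighbor in~$C$. The only divergence is how independence modulo $\mathcal{H}(G-v)$ is certified: the paper observes that each~$H_z$ contains the edge $\{v,z\}$ privately, whereas you factor through the projection onto $\mathbb{F}_2^{N(v)}$ and invoke the forest criterion for edge-incidence vectors, which for your star forest amounts to the same private-coordinate observation in disguise.
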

\begin{proof}
We show the statement by induction  on~$n$, the number of vertices of~$G$.
For~$n= 1$ the statement is obvious.
Now suppose~$n\geq 2$. By Lemma~\ref{lem:comp:nr:vs:defect},~$\cp(G) \leq 
\max\{1,\cp(G-v)\} + \defect(v)$ for every vertex~$v$. It thus suffices to argue
that~$ \dim(\mathcal{H}(G)) \geq \dim(\mathcal{H}(G-v)) + \defect(v)$ for 
every good vertex~$v$. Let~$C$ be a largest clique of~$N(v)$. 
Since~$C$ is maximal, for every vertex~$z \in N(v) \setminus C$ there is a
non-adjacent vertex~$u \in C$.
Since~$v$ is good, it then follows that for every vertex~$z\in N(v) \setminus 
C$ there exists a hole~$H_z$ which contains~$v$ and~$z$ and no other vertices 
from~$N(v) \setminus C$. Let~$\mathcal{B}$ be a basis for the 
hole space~$\mathcal{H}(G-v)$, and consider the set~$\mathcal{B}' := 
\mathcal{H}(G-v) \cup \{ H_z \mid z\in N(v) \setminus C\}$. Every hole~$H_z$ 
has an edge, namely~$\{v,z\}$, that is not contained in any other hole 
in~$\mathcal{B}'$. Thus~$\mathcal{B}'$ is an independent set and~$ 
\dim(\mathcal{H}(G)) \geq \dim( \mathcal{H}(G-v)) + |N(v) \setminus C | =  
\dim(\mathcal{H}(G-v) ) + \defect(v)$.
\end{proof}

\bibliographystyle{abbrv}

\bibliography{quasi_line_comp_holes}

\begin{thebibliography}{10}

\bibitem{Chen200017}
G.~Chen, M.~S. Jacobson, A.~E. K\'{e}zdy, J.~Lehel, E.~R. Scheinerman, and
  C.~Wang.
\newblock Clique covering the edges of a locally cobipartite graph.
\newblock {\em Discrete Math.}, 219(1--3):17--26, 2000.

\bibitem{MR2181040}
H.~H. Cho and S.-R. Kim.
\newblock The competition number of a graph having exactly one hole.
\newblock {\em Discrete Math.}, 303(1--3):32--41, 2005.

\bibitem{DBLP:conf/bcc/ChudnovskyS05}
M.~Chudnovsky and P.~D. Seymour.
\newblock The structure of claw-free graphs.
\newblock In B.~S. Webb, editor, {\em Surveys in Combinatorics}, volume 327 of
  {\em London Mathematical Society Lecture Note Series}, pages 153--171.
  Cambridge University Press, 2005.

\bibitem{Cohen}
J.~E. Cohen.
\newblock Interval graphs and food webs: a finding and a problem.
\newblock RAND Document 17696-PR, 1968.

\bibitem{Dirac}
G.~Dirac.
\newblock On rigid circuit graphs.
\newblock {\em Abh. Math. Sem. Univ. Hamburg}, 25:71--76, 1961.

\bibitem{MR2563281}
A.~Kamibeppu.
\newblock An upper bound for the competition numbers of graphs.
\newblock {\em Discrete Appl. Math.}, 158(2):154--157, 2010.

\bibitem{KimVadis}
S.-R. Kim.
\newblock The competition number and its variants.
\newblock In J.~Gimbel, J.~Kennedy, and L.~Quintas, editors, {\em Quo vadis,
  graph theory? Ann. Discrete Math. 55}, pages 313--325. Addison Wesley, 1993.

\bibitem{MR2176262}
S.-R. Kim.
\newblock Graphs with one hole and competition number one.
\newblock {\em J. Korean Math. Soc.}, 42(6):1251--1264, 2005.

\bibitem{ComHoleSpace}
S.-R. Kim, J.~Y. Lee, B.~Park, and Y.~Sano.
\newblock The competition number of a graph and the dimension of its hole
  space.
\newblock {\em Appl. Math. Lett.}, 25(3):638--642, 2012.

\bibitem{MR2651995}
S.-R. Kim, J.~Y. Lee, and Y.~Sano.
\newblock The competition number of a graph whose holes do not overlap much.
\newblock {\em Discrete Appl. Math.}, 158(13):1456--1460, 2010.

\bibitem{MR1041628}
S.-R. Kim and F.~S. Roberts.
\newblock On {O}psut's conjecture about the competition number.
\newblock {\em Congr. Numer.}, 71:173--176, 1990.

\bibitem{MR2656247}
J.~Y. Lee, S.-R. Kim, S.-J. Kim, and Y.~Sano.
\newblock The competition number of a graph with exactly two holes.
\newblock {\em Ars Combin.}, 95:45--54, 2010.

\bibitem{MR2510214}
B.-J. Li and G.~J. Chang.
\newblock The competition number of a graph with exactly {$h$} holes, all of
  which are independent.
\newblock {\em Discrete Appl. Math.}, 157(7):1337--1341, 2009.

\bibitem{boli}
B.-J. Li and G.~J. Chang.
\newblock The competition number of a graph with exactly two holes.
\newblock {\em J. Comb. Optim.}, 23:1--8, 2012.

\bibitem{MR712931}
J.~R. Lundgren and J.~S. Maybee.
\newblock A characterization of graphs of competition number {$m$}.
\newblock {\em Discrete Appl. Math.}, 6(3):319--322, 1983.

\bibitem{MR679638}
R.~J. Opsut.
\newblock On the computation of the competition number of a graph.
\newblock {\em SIAM J. Alg. Discr. Meth.}, 3(4):420--428, 1982.

\bibitem{Robertsfoodwebs}
F.~S. Roberts.
\newblock Food webs, competition graphs, and the boxicity of ecological phase
  space.
\newblock In Y.~Alavi and D.~Lick, editors, {\em Theory and Applications of
  Graphs}, volume 642 of {\em Lecture Notes in Mathematics}, pages 477--490.
  Springer Berlin/Heidelberg, 1978.

\bibitem{MR1206561}
C.~Wang.
\newblock On critical graphs for {O}psut's conjecture.
\newblock {\em Ars Combin.}, 34:183--203, 1992.

\bibitem{wang95}
C.~Wang.
\newblock Competitive inheritance and limitedness of graphs.
\newblock {\em J. Graph Theory}, 19(3):353--366, 1995.

\end{thebibliography}

\end{document}